\newtheorem{theorem}{Theorem}[section]
\newtheorem{lemma}[theorem]{Lemma}
\newtheorem{corollary}[theorem]{Corollary}
\theoremstyle{definition}
\newtheorem{remark}[theorem]{Remark}
\newtheorem*{remark*}{Remark}
\newtheorem*{example*}{Example}
\def\PG{\mathrm{PG}}
\title{The characterization of cones as pointsets with 3 intersection numbers}
\author{Dibyayoti Dhananjay Jena \thanks{Supported by the Marsden Fund Council administered by the Royal Society of New Zealand.}}
\date{}
\begin{document}
\maketitle

\begin{abstract}
	In (\cite{innamorati}), Innamorati and Zuanni have provided a combinatorial characterization of Baer and unital cones in $\PG(3,q)$. The current paper generalizes these results to arbitrary dimension. Furthermore, these results are extended to hyperoval and maximal arc cones.
\end{abstract}

\section{Introduction}
	The characterization of point sets in projective spaces by their intersection sizes with respect to subspaces has always been an interesting field of research in combinatorics. Some of the classical objects such as (Baer) subgeometries, Hermitian varieties, quadrics, conics, maximal arcs fall into the category of point sets with few intersection numbers. 

	Formally, a set of points $K$ in a projective space $\Sigma=\PG(n,q)$ is said to be of \textit{class} $[m_1,m_2,\dots,m_k]_d$ if every $d$-dimensional subspace of $\Sigma$ intersects $K$ in $m_i$ points for some $i\in\{1,2,\dots,k\}$. Moreover, $K$ is said to be a set of \textit{type} $(m_1,m_2,\dots,m_k)_d$ if it is of class $[m_1,m_2,\dots,m_k]_d$ and for every $i\in \{1,2,\dots,k\}$, there exists a $d$-dimensional subspace intersecting it in $m_i$ points. 
	The numbers $m_i$ are simply called the \textit{intersection numbers} of $K$ with respect to the $d$-dimensional subspaces. See \cite{declerck} for more information about the characterization and construction of such sets.

	A set of points $K$ is called an $(n-d)$-\textit{blocking set} if every $d$-dimensional subspaces of $\Sigma$ has non-empty intersection with $K$. When $d=1$ and 2 the blocking sets are simply called \textit{line blocking sets} and \textit{plane blocking sets} respectively. A point $P$ in an $(n-d)$-blocking set $K$ is called \textit{essential} if $K\setminus\{P\}$ is no longer an $(n-d)$-blocking set. An $(n-d)$-blocking set is called \textit{minimal} if no proper subset of it is an $(n-d)$-blocking set itself or in other words if all its points are essential. Throughout the paper the number of points in an $n$-dimensional projective space $\PG(n,q)$ is denoted by $\theta_n=\frac{q^{n+1}-1}{q-1}$. The lower bound on the size of an $(n-d)$-blocking set is $\theta_{n-d}$ and is attained only when $K$ is an $(n-d)$-dimensional subspace of $\Sigma$ \cite{bose}. An $(n-d)$-blocking set is called \textit{non-trivial} if it does not contain any $(n-d)$-dimensional subspace. Non-trivial minimal blocking sets are of great interest to mathematicians and more about them can be found in \cite{blokhuis,blokhuis2,dipaola,szonyi}.

	Given a set of points in a projective space, it is easy to find its intersection numbers with different subspaces. The converse is however not always true. Identifying a geometric structure from its intersection numbers is a classical problem which is often difficult to solve. A lot of research has gone into characterizing sets with two intersection numbers (see \cite{batten,hamilton,napolitano,napolitano2,thas,ueberberg}). However less is known about sets with more than two intersection numbers (see \cite{batten2,coykendall,hirschfeld,segre,zannetti,zuanni}). An easy way of creating sets with three intersection numbers with respect to hyperplanes is by constructing \textit{cones} with base a set with two intersection numbers with respect to its hyperplanes and a subspace as the vertex. In this paper, we show that the converse is also true some special cases, i.e. in certain cases, a set with 3 intersection numbers must be a cone.

	A \textit{cone} $K$ in $\Sigma=\PG(n,q)$ with an $m$-dimensional vertex $V,V\subset K$, is a set of points such that for any pair of points $P\in K$ and $Q\in V$ all the points on the line $PQ$ lie in $K$. For any $(n-m-1)$-dimensional subspace $B$ of $\Sigma$ disjoint from $V$, the set $K\cap B$ is called a \textit{base of the cone}. Note that all the bases of a cone are isomorphic to each other. Therefore cones are usually named after their bases, e.g. a \textit{Baer cone} is a cone with a Baer subgeometry as the base, a \textit{unital cone} is a cone with an unital as the base. 

	For $q=p^h, t\mid h$, a subgeometry of order $p^t$ in $\Sigma=\PG(n,q)$ is a subset of points in $\Sigma$ such that there is a suitable frame with respect to which their coordinates belong to the subfield $\mathbb{F}_{p^t}$ of $\mathbb{F}_q$. When $h$ is even, a subgeometry of order $\sqrt{q}$ is called a \textit{Baer subgeometry}. For $n=1$ and $2$ it is called a \textit{Baer subline} and a \textit{Baer subplane} respectively. The following result was proved in \cite{innamorati}.
	\begin{theorem}\cite[Theorem 1.1.]{innamorati}\label{innzu1}
		In $\PG(3,q),$ with $q=p^h$ a square prime power, a 2-blocking set of type $(q+1,q+\sqrt{q}+1,\sqrt{q}^3+q+1)_2$ is a Baer cone.
	\end{theorem}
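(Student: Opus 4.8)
The plan is to show that $K$ possesses a distinguished point $V$ through which every plane meeting $K$ in $m_1=q+1$ or $m_3=\sqrt{q}^{\,3}+q+1$ points passes, and that $K$ is exactly the cone with vertex $V$ over a Baer subplane. Writing $s=\sqrt{q}$, observe first that the three intersection numbers are precisely the cardinalities of the three kinds of plane section of a Baer cone: a line ($q+1$ points), a Baer subplane $\PG(2,s)$ (having $s^2+s+1=q+\sqrt{q}+1$ points), and a union of $s+1$ concurrent lines (having $1+(s+1)q=\sqrt{q}^{\,3}+q+1$ points); in a Baer cone these come respectively from a tangent plane through $V$, from a plane missing $V$, and from a secant plane through $V$. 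Recovering the cone therefore amounts to (i) classifying the three plane sections, (ii) locating $V$, and (iii) assembling $K$ as a cone. The clean geometric part is (ii)--(iii); the combinatorial weight sits in (i).

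The hypothesis that $K$ is a $2$-blocking set means that every line meets $K$, so for every plane $\pi$ the section $K\cap\pi$ is a blocking set of $\pi\cong\PG(2,q)$. A section of size $m_1=q+1$ is then a blocking set of minimum size, hence a \emph{line}; in particular $K$ already contains lines (a plane of type $m_1$ exists). The standard double counts -- over all planes, over incident point--plane pairs, and over (pair of points)--plane incidences -- give, with $\tau_i$ the number of planes of type $m_i$, the relations $\sum_i\tau_i=\theta_3$, $\sum_i m_i\tau_i=|K|\,\theta_2$ and $\sum_i\binom{m_i}{2}\tau_i=\binom{|K|}{2}(q+1)$. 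These express the $\tau_i$ in terms of $|K|$ but do not alone fix $|K|$; I would pin down $|K|=q^2+q+\sqrt{q}^{\,3}+1$, whence $\tau_1=q^2-\sqrt{q}$, $\tau_2=q^3$, $\tau_3=q+\sqrt{q}+1$, by combining them with the pencil relation $\sum_{\pi\supseteq\ell}|K\cap\pi|=q\,|K\cap\ell|+|K|$ for a line $\ell$ (the $q+1$ planes through $\ell$ cover $K$ with multiplicity $q+1$ on $\ell$ and $1$ off it), whose integral consequences in the square case $q=s^2$ are restrictive.

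The main obstacle will be the structure of the sections of types $m_2$ and $m_3$. Since every section is a blocking set, one of size $q+\sqrt{q}+1$ has Bruen's minimal size, so \emph{once it is shown to contain no line} it must be a Baer subplane by Bruen's theorem; the hard point is to rule out the trivial alternative of a line plus $\sqrt{q}$ extra points, which the counting relations do not by themselves forbid. I would exclude it by a neighbourhood argument: a line $L\subseteq K$ lying in a type-$m_2$ plane forces, via the pencil relation applied to $L$ and to lines meeting $L$, a distribution of plane-types on the planes through $L$ incompatible with the values of the $\tau_i$ and with the square-power arithmetic. A parallel analysis should identify every type-$m_3$ section as a fan of $s+1$ lines through one point of its plane. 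This classification, rather than the ensuing geometry, is where essentially all the work lies.

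Granting the classification, the endgame is short. First, no two lines of $K$ are skew: if $L_1,L_2\subseteq K$ were skew, each of the $q+1$ planes through $L_1$ would meet $L_2$, giving a section of size $\ge q+2$ containing the whole line $L_1$; as type-$m_2$ planes are line-free they are excluded, so all $q+1$ of these planes -- and likewise all $q+1$ through $L_2$ -- would be of type $m_3$, and since no plane contains both skew lines this yields at least $2(q+1)>\tau_3=q+\sqrt{q}+1$ planes of type $m_3$, a contradiction. Hence the lines of $K$ pairwise meet, and since two distinct type-$m_3$ planes each carry $s+1\ge 2$ of their fan-lines, these lines cannot all be coplanar; so they are concurrent at a point $V\in K$. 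Every type-$m_1$ and type-$m_3$ plane contains a line of $K$ and hence passes through $V$; as there are $\tau_1+\tau_3=q^2+q+1=\theta_2$ such planes and exactly $\theta_2$ planes through $V$, no type-$m_2$ plane passes through $V$. Thus every plane through $V$ is of type $m_1$ or $m_3$, in which a line through $V$ either lies in $K$ or meets $K$ only in $V$; consequently $VP\subseteq K$ for each $P\in K$, so $K$ is a cone with vertex $V$. Finally, for a plane $\pi_0\not\ni V$ (necessarily of type $m_2$) every line of $\pi_0$ meets $K$, since its span with $V$ is a type-$m_1$ or $m_3$ plane whose section crosses that line; hence $K\cap\pi_0$ is a Baer subplane, and $K$ is a Baer cone.
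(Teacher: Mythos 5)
Your endgame is correct: granting (i) the value $k=\sqrt{q}^3+q^2+q+1$ with the resulting $\tau_1=q^2-\sqrt q$, $\tau_2=q^3$, $\tau_3=q+\sqrt q+1$, (ii) that $b$-plane sections contain no line, and (iii) that $c$-plane sections are fans of $\sqrt q+1$ concurrent lines, your skew-line count, the concurrency of the lines of $K$ at a point $V$, the identity $\tau_1+\tau_3=\theta_2$, and Bruen's theorem applied to a plane missing $V$ do assemble $K$ into a Baer cone, and this synthetic route is genuinely different from the strategy of this paper, which quotes the $\PG(3,q)$ result from Innamorati--Zuanni and, for the higher-dimensional generalization (Theorem~\ref{baer2}), never classifies section types at all but instead pins down $k$ and invokes Bokler's classification of small minimal blocking sets. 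The problem is that (i)--(iii) are exactly where the difficulty of the theorem lives, and none of them is proved. For (i), the three global double counts cannot determine $k$: they merely express the $\tau_i$ as functions of $k$, exactly as in \eqref{ta}--\eqref{tc}, and the ``integral consequences'' of the pencil relation that you claim will fix $k$ are never exhibited. Both in this paper (Steps 2--4 of Theorem~\ref{baer2}) and in the cited original, this step requires real work: an upper bound on $k$ from the pencil of planes through an $a$-plane section, a congruence such as $k\equiv 1\pmod q$ obtained as in Lemma~\ref{equivl}, and a sign analysis of the quadratic expression \eqref{ta} for $t_a$ as a function of $k$ on the relevant interval; nothing of this kind appears in your proposal. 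For (ii), your proposed ``neighbourhood argument'' is circular as sketched: it is supposed to derive a contradiction with ``the values of the $\tau_i$'', but those values are known only once $k$ is, and your determination of $k$ is itself the unproved step (i). For (iii), the claim is deferred wholesale to ``a parallel analysis''; yet a blocking set of size $\sqrt{q}^3+q+1$ in $\PG(2,q)$ is a priori nothing like a union of concurrent lines (nothing you write excludes, say, a unital together with $q$ further points), and both your non-coplanarity argument and your cone-assembly step lean on this fan structure.

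So, as written, the proposal is a correct reduction of the theorem to three unproved lemmas that carry essentially all of its content; it is a plan, not a proof. Two remarks if you want to complete it. First, the load on (iii) can be lightened: once (i) and (ii) are established and you know that every $a$- and $c$-plane contains a line of $K$, that all lines of $K$ pass through $V$, and that the $a$- and $c$-planes number $\theta_2$, every plane not through $V$ meets $K$ in exactly $b$ points, so Lemma~\ref{lemcon} of this paper immediately yields that $K$ is a cone with vertex $V$, after which Bruen's theorem on a base plane finishes; you then need from $c$-planes only enough line structure to force concurrency, not the full fan description. Second, steps (i) and (ii) are unavoidable in any such argument, and you should expect them to consume the same analytic effort (bounds, divisibility, sign analysis) that the paper spends in its Steps 2--4, rather than following from integrality alone.
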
 
	In section 2.1 this result will be generalized to a general cone with an even dimensional Baer subgeometry as the base and a subspace as the vertex.

	An \textit{embedded unital}  is a set of $\sqrt{q}^3+1$ points in a projective plane $\PG(2,q)$, $q=p^h$ a square prime power, such that every line intersects it in either $\sqrt{q}+1$ or $1$ point(s). In other words it is a set of $\sqrt{q}^3+1$ points of type $(1,\sqrt{q}+1)_1$ in $\PG(2,q)$. The following result was also proved in \cite{innamorati}.
	\begin{theorem}\cite[Theorem 1.2.]{innamorati}\label{innzu2}
		In $\PG(3,q),$ with $q=p^h$ a square prime power, a 2-blocking set of type $(q+1,\sqrt{q}^3+1,\sqrt{q}^3+q+1)_2$ is an unital cone.
	\end{theorem}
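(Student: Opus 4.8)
Write $s=\sqrt q$, so that the three intersection numbers read $m_1=q+1=s^2+1$, $m_2=s^3+1$ and $m_3=s^3+q+1=s^3+s^2+1$; note the relation $m_3=m_1+m_2-1$, which is the numerical signature of a cone. The plan is to dispose of the easy forward direction and then reverse it. For the forward direction one checks directly that a cone $K$ with vertex a point $V$ and base an embedded unital $U\subset\pi$ ($\pi$ a plane not through $V$) has $|K|=1+q(s^3+1)=qs^3+q+1$ points, that a plane off $V$ meets $K$ in $|U|=m_2$ points, and that a plane $\alpha\ni V$ meets $K$ in $1+q\,|r\cap U|$ points with $r=\alpha\cap\pi$; since $|r\cap U|\in\{1,s+1\}$ this yields $m_1$ or $m_3$. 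In particular the target plane-type distribution is $t_2=q^3$ (all planes off $V$), $t_1=s^3+1$ (tangent planes through $V$) and $t_3=q^2-qs+q$ (secant planes through $V$), where $t_i$ is the number of planes meeting $K$ in $m_i$ points.

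For the converse, set $n=|K|$ and write down the three standard double-counting identities $t_1+t_2+t_3=\theta_3$, $\sum_i m_i t_i=n\theta_2$, and $\sum_i m_i(m_i-1)t_i=n(n-1)\theta_1$, obtained by counting incident pairs (point of $K$, plane) and triples (ordered pair of points of $K$, plane through both). Since the $m_i$ are distinct these express $(t_1,t_2,t_3)$ as explicit quadratics in $n$; however — and this is the first genuine difficulty — they do \emph{not} determine $n$: for fixed $q$ the constraints $t_i\ge 0$ together with integrality hold on a whole interval of candidate values of $n$, so a geometric argument is needed to single out $n=qs^3+q+1$. The key auxiliary relation I would use throughout is the per-line identity $\sum_{\alpha\supseteq\ell}|\alpha\cap K|=n+q\,|\ell\cap K|$, the sum running over the $q+1$ planes through a line $\ell$.

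The heart of the proof, and the step I expect to be the main obstacle, is to locate the vertex and fix $n$ simultaneously. I would first analyse a plane $\pi_0$ of the minimal type $m_1$, where $|K\cap\pi_0|=q+1$, and show that these $q+1$ points are collinear, so that $K$ contains a \emph{generator} line; this is a planar argument exploiting that $K$ is a $2$-blocking set with no plane. Next I would prove that any two generators meet, whence — a family of pairwise concurrent lines of $\PG(3,q)$ lying in no common plane must pass through a single point — all generators pass through one point $V$. Equivalently $V$ is a point lying on no plane of type $m_2$, so every plane through $V$ is of type $m_1$ or $m_3$; the per-line identity applied to lines through $V$ then gives the divisibility $|\ell\cap K|\equiv 1\pmod s$, and, combined with the relation $m_3=m_1+m_2-1$, pins the local distribution of plane-types at $V$. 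Feeding this back into the counting identities forces $n=qs^3+q+1$, $t_2=q^3$, $t_1=s^3+1$ and $t_3=q^2-qs+q$. Controlling the plane-types through a line tightly enough to carry out the collinearity and concurrency claims is the delicate part; everything downstream is comparatively routine.

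Once $V$ and $n$ are known, the cone property follows by a covering count. For a generator $\ell$ (which passes through $V$) no plane through $\ell$ is of type $m_2$, and the per-line identity forces exactly one of the $q+1$ planes through $\ell$ to be of type $m_1$; hence the $t_1=s^3+1$ planes of type $m_1$ are in bijection with generators, giving $s^3+1$ distinct generators through $V$, pairwise meeting only in $V$. These cover $1+(s^3+1)q=n$ points of $K$, so they exhaust $K$, and $K$ is the union of lines through $V$, i.e. a cone with vertex $V$. Finally, fix a plane $\pi$ of type $m_2$ and put $U=K\cap\pi$, a set of $m_2=s^3+1$ points. For a line $r\subset\pi$ the plane $\langle V,r\rangle$ is of type $m_1$ or $m_3$ and meets $K$ in $1$ or $s+1$ generators, each meeting $\pi$ in a single point of $U$; thus $|r\cap U|\in\{1,s+1\}$. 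Hence $U$ is a set of $s^3+1$ points of type $(1,\sqrt q+1)_1$ in $\pi\cong\PG(2,q)$, that is, an embedded unital, and $K$ is the cone over $U$ with vertex $V$ — a unital cone, as required.
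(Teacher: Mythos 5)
Your proposal inverts the logical order that makes this theorem provable, and the inversion is not a matter of taste: it makes the middle of your argument circular. You plan to locate the vertex $V$ geometrically first (generators pairwise meet, hence are concurrent) and only afterwards ``feed this back'' into the counting identities to force $n=q\sqrt{q}^3+q+1$. But to pass from ``pairwise intersecting'' to ``concurrent at a point'' you must know there are at least two (in fact many) distinct generators and rule out the coplanar alternative, and a priori all you have is $t_1\ge 1$; both the count $t_1=\sqrt{q}^3+1$ and the non-coplanarity are consequences of knowing $n$, not inputs to it. Worse, your asserted equivalence ``all generators pass through $V$ $\Leftrightarrow$ $V$ lies on no plane of type $m_2$'' cannot hold formally, because of a numerical coincidence specific to this parameter set: since $\sqrt{q}\cdot q=\sqrt{q}^3$, a plane through $V$ containing exactly $\sqrt{q}$ lines of $K$ through $V$ would meet $K$ in exactly $1+\sqrt{q}\cdot q=m_2$ points. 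Excluding $m_2$-planes through $V$ is therefore a counting fact (it follows only once one knows $t_2=q^3$ equals the number of planes off $V$), not a consequence of concurrency. Since claims (i) and (ii) are exactly what your determination of $n$ rests on, and they themselves require $n$, the argument does not close; this is the step you yourself flag as ``the main obstacle,'' and it genuinely fails as routed.

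The way out --- both in Innamorati--Zuanni's proof of the $\mathrm{PG}(3,q)$ case and in this paper's proof of the $n\ge4$ analogue (Theorem \ref{unital}) --- is to do all the global counting \emph{before} any geometry: the upper bound $k\le (q+1)+q\sqrt{q}^3$ from the $q$ planes through one generator (Step 2), the congruence $k\equiv 1\pmod q$ from Lemma \ref{equivl} (Step 3), a lower bound (Step 4), and, crucially, the observation that $t_c$ is a quadratic in $k$ with positive leading coefficient which is negative on the whole forbidden interval of values of $k$; together these force $k=q\sqrt{q}^3+q+1$ (Step 5). Only then do the local counts $u_a(\alpha)=1$, $u_b(\alpha)=0$, $u_c(\alpha)=q$ become available (Step 6), and these are what make the generators distinct, $\sqrt{q}^3+1$ in number, non-coplanar (Step 8, again via the value of $t_c$) and concurrent (Step 9). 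Your endgame is fine and essentially matches the paper's Steps 6--10: the covering count exhausting $K$ by the $\sqrt{q}^3+1$ lines through $V$, and the identification of the base; indeed your direct verification that the base has type $(1,\sqrt{q}+1)_1$ is a nice alternative to the paper's appeal to the Bruen--Thas characterization of minimal blocking sets, though it too needs ``no $m_2$-plane through $V$,'' which again only follows from $t_2=q^3$ after the cone structure is established. If you reorganize so that the quadratic-sign argument pins down $n$ first, the rest of your proof goes through essentially as written.
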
  
	Section 2.2 generalizes this result to a general cone with a unital as base and a subspace as vertex.
	
	A \textit{maximal arc} of \textit{degree} $d$ in a projective plane $\Sigma=\PG(2,q)$ is a nonempty set of points $K$ such that every line of $\Sigma$ meets $K$ in either $0$ or $d$ points. It is not difficult to show that the size of a maximal arc is $qd+d-q$ and the degree $d$ of a maximal arc, distinct from the whole plane, always divides the order $q$ of the plane. In the special cases when $d=1$, a maximal arc is just a point, when $d=q$ it is the complement of a line and when $d=q+1$ it is the entire plane. A maximal arc is called \textit{trivial} if it is equivalent to one of the three aforementioned examples and \textit{non-trivial} otherwise. In \cite{ball} it was proven that non-trivial maximal arcs do not exist in Desarguesian projective planes of odd order, however when $q$ is even, maximal arcs of every admissible order are known to exist (see \cite{denniston}). 

	When $d=2$ a maximal arc is simply called a \textit{hyperoval}. A lot of research has gone into studying hyperovals and constructing them (see \cite{cherowitzo}, \cite{cherowitzo2}). In section 2.3 we explore cones with hyperovals as base and a subspace as vertex. Then the result has been generalized to maximal arc cones. However, unlike the previous cases, our characterization only holds for projective spaces with dimension at least five in case of maximal arc cones.
	
\section{Cones over some classical geometric objects}
	Throughout this section we assume that $K$ is a point set in $\PG(n,q)$ of type $(a,b,c)_{n-1}$. The number of points in $K$ is denoted by $k$. The hyperplanes intersecting $K$ in $i $ points are simply called $i$-\textit{hyperplanes} (or \textit{$i$-planes} if $n=3$) and the total number of $i$-hyperplanes in $\Sigma$ is denoted by $t_i$.

	With the notations above, standard double counting of the number of hyperplanes in $\Sigma$, the pairs $(P,H)$ with $P\in K$ and $H$, a hyperplane containing $P$, and the triples $(P_1,P_2,H)$ with $P_1,P_2\in K$ and $H$, a hyperplane containing $P_1$ and $P_2$, we get 
	\begin{equation}\label{1eq}
		t_a+t_b+t_c=\theta_n,
	\end{equation}
	\begin{equation}\label{2eq}
		at_a+bt_b+ct_c=k\theta_{n-1},
	\end{equation}
	\begin{equation}\label{3eq}
		a(a-1)t_a+b(b-1)t_b+c(c-1)t_c=k(k-1)\theta_{n-2}.
	\end{equation} 
	Solving these equations for $t_a,t_b$ and $t_c$, we get 
	\begin{equation}\label{ta}
		t_a=\dfrac{1}{(b-a)(c-a)}(k^2\theta_{n-2}-k(\theta_{n-2}+(b+c-1)\theta_{n-1})+bc\theta_n),
	\end{equation}
	\begin{equation}\label{tb}
		t_b=\dfrac{1}{(c-b)(a-b)}(k^2\theta_{n-2}-k(\theta_{n-2}+(a+c-1)\theta_{n-1})+ac\theta_n),
	\end{equation}
	\begin{equation}\label{tc}
		t_c=\dfrac{1}{(a-c)(b-c)}(k^2\theta_{n-2}-k(\theta_{n-2}+(a+b-1)\theta_{n-1})+ab\theta_n).
	\end{equation}
	Note that $t_a,t_b,t_c\ge1$ as $K$ is of type $(a,b,c)_{n-1}$. 
 	
 	The following lemma provides an essential divisibility condition on $k$ using the intersection numbers $a,b$ and $c$.
 	\begin{lemma}\label{equivl}
 		If $K$ is a point set in $\Sigma=\PG(n,q)$ of type $(a,b,c)_{n-1}$ with $a\equiv b\equiv c\equiv \theta_{n-1}\equiv \alpha$(mod $\beta$) where $\alpha$ and $\beta$ are coprime, then $k\equiv\theta_n$(mod $\beta$).
 	\end{lemma}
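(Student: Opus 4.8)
The plan is to extract the congruence directly from the first two double-counting identities, \eqref{1eq} and \eqref{2eq}, without needing the quadratic identity \eqref{3eq} at all. Since $t_a,t_b,t_c$ are integers, every term appearing in these identities may be reduced modulo $\beta$, and the whole argument is a single manipulation followed by a cancellation.

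First I would reduce equation \eqref{2eq} modulo $\beta$. Because $a\equiv b\equiv c\equiv\alpha\pmod{\beta}$, the left-hand side satisfies
\[
    at_a+bt_b+ct_c\equiv\alpha(t_a+t_b+t_c)\pmod{\beta}.
\]
Substituting the first identity \eqref{1eq}, namely $t_a+t_b+t_c=\theta_n$, the left-hand side is congruent to $\alpha\theta_n$. On the right-hand side of \eqref{2eq}, the hypothesis $\theta_{n-1}\equiv\alpha$ gives $k\theta_{n-1}\equiv k\alpha\pmod{\beta}$. Equating the two sides yields
\[
    \alpha(k-\theta_n)\equiv0\pmod{\beta}.
\]

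The only place the full hypotheses enter is the final cancellation: since $\alpha$ and $\beta$ are coprime, $\alpha$ is invertible modulo $\beta$, so the factor $\alpha$ may be divided out to give $k\equiv\theta_n\pmod{\beta}$, as required. I do not expect any genuine obstacle here—the computation is essentially a one-line combination of \eqref{1eq} and \eqref{2eq}. The only points meriting care are that all quantities involved are integers, so that reduction modulo $\beta$ is legitimate, and that the coprimality of $\alpha$ and $\beta$ is precisely the condition licensing the removal of the common factor $\alpha$; without it one could only conclude $\alpha(k-\theta_n)\equiv 0$, which is weaker.
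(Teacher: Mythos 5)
Your proof is correct and follows essentially the same route as the paper: reduce \eqref{2eq} modulo $\beta$ using $a\equiv b\equiv c\equiv\theta_{n-1}\equiv\alpha$, substitute \eqref{1eq} for $t_a+t_b+t_c$, and cancel $\alpha$ via the coprimality hypothesis. The only difference is cosmetic (you substitute $\theta_n$ before cancelling $\alpha$ rather than after), so there is nothing to add.
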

 	
 	\begin{proof}
 		As $a\equiv b\equiv c\equiv \theta_{n-1}\equiv \alpha$(mod $\beta$), from \Cref{2eq} we have 
 		\begin{align*}
 			&k\theta_{n-1}\equiv at_a+bt_b+ct_c(\text{mod } \beta)\\
 			\Rightarrow&k\alpha\equiv (t_a+t_b+t_c)\alpha(\text{mod } \beta).
 		\end{align*}
 		Since $\alpha$ and $\beta$ are coprime, $\alpha$ is invertible in $\mathbb{Z}/\beta\mathbb{Z}$. So using \Cref{1eq} it follows that
 		\begin{align*}	
 			&k\equiv \theta_n(\text{mod } \beta).
 		\end{align*}
 	\end{proof}
 
\subsection{Baer cones}
	Let $\Sigma=\PG(n,q)$, where $q=p^h$ is a square prime power. For $-1\le r\le n,-1\le s\le n,r+s<n,$ an $(r,s)$-\textit{Baer cone} in $\Sigma$ is the set of points of a cone with an $r$-dimensional subspace as vertex and an $s$-dimensional Baer subgeometry as the base. The number of points in an $(r,s)$-Baer cone, denoted by $C_{r,s}$, is $$C_{r,s}=\dfrac{\sqrt{q}^{s+1}-1}{\sqrt{q}-1}q^{r+1}+\dfrac{q^{r+1}-1}{q-1}.$$
	It is easy to see that for $r\ge0,s\ge 1, r+s=n-1$, an $(r,s)$-Baer cone is a set of type $(a,b,c)_{n-1}$ with $a=C_{r,s-2},b=C_{r-1,s},c=C_{r,s-1}$ and blocks all lines of $\Sigma$. In this section we prove the converse.

	\begin{theorem}\label{baer2}
 		In $\Sigma=\PG(n,q)$, with $n\ge4, q\ge16$ and $q=p^h$ a square prime power, for $2\le2t\le n$ an $(n-t)$-blocking set $K$ of type $(a,b,c)_{n-1}$, where $a=C_{n-2t-1,2t-2},b=C_{n-2t-2,2t}$ and $c=C_{n-2t-1,2t-1}$, is an $(n-2t-1,2t)$-Baer cone.
 		
 		Moreover the statement is also true for $q\ge4$ if $t=1$.
	\end{theorem}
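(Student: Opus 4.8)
The plan is to follow the five-step scheme of the $t=1$ case, with $(n-2t-1,2t-2)$-Baer cones now playing the role that $(n-2)$-dimensional subspaces play there (we may assume $n\ge 2t+1$, since for $n=2t$ the set degenerates to a Baer subgeometry and has only two intersection numbers). A direct computation from the definition of $C_{r,s}$ gives
$$b-a=\sqrt{q}^{\,2n-4t-1},\qquad c-a=\sqrt{q}^{\,2n-2t-1},\qquad C_{n-2t-1,2t}-c=\sqrt{q}^{\,2n-2t},$$
so the target cone has exactly $C_{n-2t-1,2t}$ points. I would first record a divisibility: since $b-a$, $c-a$ and $\theta_{n-1}-a$ are each divisible by $\beta:=\sqrt{q}^{\,2n-4t-1}$, we have $a\equiv b\equiv c\equiv\theta_{n-1}\pmod{\beta}$, and the common residue equals $\theta_{n-2t-1}\equiv 1\pmod p$, hence is coprime to the prime power $\beta$. \Cref{equivl} then yields $k\equiv\theta_n\equiv a\pmod{\beta}$, the analogue of the congruence step.

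The structure of the smallest ($a$-)sections is the analogue of the first step and is the part I expect to be the main obstacle. The goal is to show that if $\pi$ is an $a$-hyperplane then $K\cap\pi$ is contained in an $(n-2)$-dimensional subspace of $\pi$ (and is in fact an $(n-2t-1,2t-2)$-Baer cone). When $t=1$ this is immediate, because $a=\theta_{n-2}$ is exactly the minimum size of a line-blocking set of $\pi\cong\PG(n-1,q)$, so \cite{bose} forces $K\cap\pi$ to be an $(n-2)$-space. For $t\ge 2$, however, $a=C_{n-2t-1,2t-2}$ strictly exceeds the minimum size $\theta_{n-1-t}$ of a $t$-subspace blocking set of $\pi$, so $K\cap\pi$ is a \emph{non-minimal} blocking set and extremality alone is insufficient; I would instead combine the global constraint that every hyperplane section of $K$ has size in $\{a,b,c\}$ with the classification of small minimal blocking sets in $\PG(n-1,q)$ from \cite{boklerl} to exclude the possibility that $K\cap\pi$ spans $\pi$. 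Making this localization work when the small sections are no longer forced to be subspaces is the crux of the argument.

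Granting the localization, the counting bound follows as before: if $W$ is an $(n-2)$-subspace containing $K\cap\pi$, then partitioning $K$ among the $q+1$ hyperplanes through $W$ and using $|K\cap\pi'|\le c$ for each gives
$$k\ \le\ a+q(c-a)\ =\ a+\sqrt{q}^{\,2n-2t+1}.$$
I would then run the quadratic argument on \Cref{ta}: since $a<b<c$, the expression for $t_a$ is an upward parabola in $k$, so it suffices to check that $t_a<0$ both at $k=C_{n-2t-1,2t}+\beta$ (the first value above the target compatible with $k\equiv a\pmod{\beta}$) and at $k=a+\sqrt{q}^{\,2n-2t+1}$; convexity then forces $t_a<0$ on the entire interval between these values, contradicting $t_a\ge 1$. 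The two evaluations are routine but must be verified to be negative for all $n\ge 4$ and all square prime powers $q\ge 16$ (respectively $q\ge 4$ when $t=1$), which is exactly the source of the numerical hypothesis on $q$. The outcome is $k\le C_{n-2t-1,2t}$.

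The final step mirrors the concluding step of the $t=1$ case. With $k\le C_{n-2t-1,2t}$, \cite[Theorem 1.5]{boklerl} applied in $\Sigma$ shows that $K$ contains a minimal $(n-t)$-blocking set that is either an $(n-t)$-dimensional subspace or an $(n-2t-1,2t)$-Baer cone. The subspace is impossible, as it would produce a hyperplane meeting $K$ in $\theta_{n-1}\notin\{a,b,c\}$ points. Hence $K$ contains an $(n-2t-1,2t)$-Baer cone $K'$, and since $K'\subseteq K$ we obtain $C_{n-2t-1,2t}=|K'|\le k\le C_{n-2t-1,2t}$, whence $k=C_{n-2t-1,2t}$ and $K=K'$. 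For $t=1$ the same scheme applies with the cleaner first step, and the quadratic inequalities already hold for $q\ge 4$, which accounts for the final ``moreover'' clause.
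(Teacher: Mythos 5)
Your overall scheme (congruence, upper bound from an $a$-hyperplane, negativity of $t_a$ on an interval, Bokler's classification to finish) is the same as the paper's, but the proposal has a genuine gap exactly where you flag it, and the missing idea is not the one you are looking for. For $t\ge2$ you never prove the localization of Step 1, and your Step-2 bound $k\le a+q(c-a)$ depends on it: a priori $K\cap\pi$ is just an $(n-t-1)$-blocking set of $\pi$ of size $a$, and nothing yet prevents it from being, say, an $(n-t-1)$-subspace together with $a-\theta_{n-t-1}$ points spanning all of $\pi$. The paper's resolution is to prove \emph{less}, not more: take a minimal $(n-t-1)$-blocking set $K'\subseteq K\cap\pi$; since $|K'|\le a\le\theta_{n-t-1}+\theta_{n-t-2}\sqrt{q}$, Bokler's theorem \cite[Theorem 1.6]{boklerl} applied inside $\pi$ shows $K'$ contains an $(s,2(n-t-s-2))$-Baer cone, and the case $s=n-2t-2$ is excluded because such a cone already has $C_{n-2t-2,2t}=b>a$ points; hence $s\ge n-2t-1$, so this cone spans at most an $(n-2)$-space $\omega$ of $\pi$ and places at least $\theta_{n-t-1}$ points of $K$ inside $\omega$. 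Counting over the hyperplanes through $\omega$ then yields only the weaker bound $k\le a+q(c-\theta_{n-t-1})=a+\sqrt{q}^{2n-4t+3}\theta_{t-1}$, and it is against this weaker endpoint that the paper's quadratic evaluations of $t_a$ are calibrated. In short: one does not localize the whole section $K\cap\pi$, only a guaranteed large piece of it, and the counting bound is weakened accordingly; this removes your ``crux'' entirely.

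A second, smaller defect is in your concluding step. The clean dichotomy ``$(n-t)$-subspace or $(n-2t-1,2t)$-Baer cone'' is the content of \cite[Theorem 1.5]{boklerl} for line-blocking sets, i.e.\ for $t=1$ only. For $t\ge2$ the paper instead invokes \cite[Theorem 1.6]{bokler} (a different Bokler paper, on square order), which only guarantees that a minimal $(n-t)$-blocking set inside $K$ contains an $(s,2(n-t-s-1))$-cone for some $s$ with $\max\{-1,n-2t-1\}\le s\le n-t-1$; one must then rule out \emph{every} $s\ge n-2t$, not just the subspace case, which the paper does by noting that each such cone lies in a hyperplane $H$ and has more than $q^{n-t}>c$ points, contradicting $|K\cap H|\le c$. (Also, for $t\ge2$ the subspace case gives a hyperplane with at least $\theta_{n-t}$ points of $K$, not $\theta_{n-1}$.) Your congruence step, with modulus $\sqrt{q}^{2n-4t-1}$ and residue $\theta_{n-2t-1}$, is correct and fine; the proposal stands or falls on the two points above, and as written it falls on the first.
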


	The proof of the theorem has been split into the following five steps.

	\textbf{Step 1.} If $\pi$ is an $a$-hyperplane then $K\cap \pi$ contains an $(s,2(n-t-s-2))$-Baer cone for some $s$ with $n-2t-1\le s\le n-t-2$. 

	\begin{proof}\renewcommand{\qedsymbol}{}
		As $K$ is an $(n-t)$-blocking set, the restriction of $K$ to any subspace of $\Sigma$ also blocks the $t$-spaces contained in it. Hence for a hyperplane $\pi$, $K\cap\pi$ is an $(n-t-1)$-blocking set.
	
		Let $\pi$ be an $a$-hyperplane and $K'$ be a minimal $(n-t-1)$-blocking set in it. As $|K'|\le|K\cap\pi|= a=\theta_{n-t-1}+\theta_{t-2} \sqrt{q}q^{n-2t}\le\theta_{n-t-1} + \theta_{n-t-2} \sqrt{q}$, by \cite[Theorem 1.6]{boklerl} $K'$ contains an $(s,2(n-t-s-2))$-Baer cone for some $s$ with max$\{-1,n-2t-2\}\le s\le n-t-2$. However $K'$ cannot contain an $(n-2t-2,2t)$-Baer cone as otherwise $a= |K\cap\pi|\ge|K'|\ge C_{n-2t-2,2t}> a$, a contradiction. As $n\ge2t,$ max$\{-1,n-2t-1\}=n-2t-1$. Hence $\pi$ contains an $(s,2(n-t-s-2))$-Baer cone for some $s$ with $n-2t-1\le s\le n-t-2$.
		
		If $t=1$, since $|K\cap\pi|=a=\theta_{n-2}$, by \cite{bose} $K\cap \pi$ is an $(n-2)$-dimensional subspace of $\Sigma$. 
	\end{proof}

	\textbf{Step 2.} $K$ has at most $a+\sqrt{q}^{2n-4t+3}\theta_{t-1}$ points.

	\begin{proof}\renewcommand{\qedsymbol}{}
		Let $\pi$ be an $a$-hyperplane in $\Sigma$. Then by Step 1, $\pi$ contains an $(s,2(n-t-s-2))$-Baer cone for some $s$ with $n-2t-1\le s\le n-t-2$. Considering dimensions of the base and the vertex of the cone we see that in all the possible cases the cone is contained in an $(n-2)$-dimensional subspace. Considering the size of the cone we get that there exists an $(n-2)$-dimensional subspace $\omega$ of $\pi$ containing at least $\theta_{n-t-1}$ points of $K$. As any hyperplane through $\omega$ other than $\pi$ has at most $c$ points in it, we get 
		\begin{equation*}
			k\le a+q(c-\theta_{n-t-1})=a+\sqrt{q}^{2n-4t+3}\theta_{t-1}.
		\end{equation*} 
	\end{proof}

	\textbf{Step 3.} $K$ has 1(mod $q$) points.

	\begin{proof}\renewcommand{\qedsymbol}{}
		As $a\equiv b\equiv c\equiv \theta_{n-1}\equiv\theta_n\equiv1(\text{mod } q)$ from \Cref{equivl}, we have $k\equiv1(\text{mod }q)$.
	\end{proof}

	\textbf{Step 4.} $k<C_{n-2t-1,2t}+q=\theta_{n-t}+\sqrt{q}^{2n-4t+1}\theta_{t-1}+q.$ 

	\begin{proof}\renewcommand{\qedsymbol}{}
		Suppose to the contrary $k> C_{n-2t-1,2t}$. From \Cref{ta}, $t_a$ is a quadratic expression in $k$ with a positive leading coefficient as $a<b<c$. Therefore if $t_a$ is negative for two distinct values $k_0,k_1$ (with $k_0<k_1$) of $k$, then it is negative in the interval $[k_0,k_1]$.
	
		Evaluating $t_a$ for the given values of $a,b,c$ and $k=C_{n-2t-1,2t}+q$, we get 
		\begin{equation*}
			t_a=\theta_{t-1}(-q^{t+3/2}+q^{t+1}-q^{1/2}+2q^{-n+2t+2}-q^{-n+t+3/2})+\theta_{n-3}(q^{-2n+3t+3})
		\end{equation*}
		which is negative for all $n\ge4, 2\le2t\le n $ and $q\ge16$. For $t=1$, the expression above is negative for all $n\ge4, q\ge4$. 
			
		Evaluating $t_a$ for $k=a+\sqrt{q}^{2n-4t+3}\theta_{t-1}$, we get 
		\begin{align*}
			t_a=\dfrac{1}{(q-1)^3}&[-q^{n+t+5/2}+q^{n+t+2}+2q^{n+t+3/2}-q^{n+t+1}-3q^{n+t+1/2}+2q^{n+t-1/2}+q^{n+t-1}\\
			&+q^{n-t+2}+q^{n+5/2}-q^{n+2}-q^{n+1/2}-q^n+q^{2t+7/2}-q^{2t+5/2}-q^{2t+2}+2q^{2t+1}\\
			&+q^{2t+1/2}-q^{2t}-q^{2t-1/2}-q^{t+4}-q^{t+7/2}+q^{t+3}-q^{t+2}+q^{t+3/2}-q^{-t+4}\\
			&-q^{-t+3}+q^{-t+2}+2q^4-q^2+2q-1]
		\end{align*}
		which is negative for all $q\ge16.$ For $t=1$, the expression above is negative for all $n\ge4,q\ge4$.
	
		Therefore $t_a$ is negative for $k$ in $[C_{n-2t-1,2t}+q,a+\sqrt{q}^{2n-4t+3}\theta_{t-1}]$ for all $n\ge4, 2\le2t\le n $ and $q\ge16$ and for $t=1$ it is negative for all $n\ge4,q\ge4$. As from Step 2 $k\le a+\sqrt{q}^{2n-4t+3}\theta_{t-1}$, we have $k< C_{n-2t-1,2t}+q$. 
	\end{proof}

	\textbf{Step 5.} $K$ is an $(n-2t-1,2t)$-Baer cone.
	
	\begin{proof}
		By Steps 3 and 4 we see that $K$ has at most $C_{n-2t-1,2t}$ points. Let $K'$ be a minimal $(n-t)$-blocking set of $\Sigma$ contained inside $K$. Then $K'$ has at most $C_{n-2t-1,2t}\le \theta_{n-t}+\theta_{n-t-1}\sqrt{q}$ points. Thus by \cite[Theorem 1.6]{bokler} (and \cite[Theorem 1.5]{boklerl} for $t=1$ and $q\ge4$) $K'$ must contain an $(s,2(n-t-s-1))$-cone for some $s$ with max$\{-1,n-2t-1\}\le s\le n-t-1.$ Note that if $s\ge n-2t$, then there exists a hyperplane $H$ of $\Sigma$ containing the aforementioned cone. In this case $|K\cap H|\ge|K'\cap H|>q^{n-t}>c$, a contradiction to the assumption that every hyperplane meets $K$ in either $a,b$ or $c$ points. Hence the only possible cone contained in $K'$ is an $(n-2t-1,2t)$-cone. Comparing the sizes of $K$ and the cone $K'$, we see that $K$ is an $(n-2t-1,2t)$-cone. 
	\end{proof}

		Combining Theorems \ref{innzu1} and \ref{baer2}, we obtain the following corollary.
	\begin{corollary}\label{baergen}
		In $\Sigma=\PG(n,q)$, with $n\ge3$ and $q=p^{h}$ a square prime power, an $(n-1)$-blocking set $K$ of type $(a,b,c)_{n-1}$, where $a=C_{n-3,0},b=C_{n-4,2}$ and $c=C_{n-3,1}$, is an $(n-3,2)$-Baer cone.
	\end{corollary}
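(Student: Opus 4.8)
The plan is to prove the corollary by splitting on the dimension $n$, reading it off from Theorem \ref{baer2} when $n\ge4$ and from Theorem \ref{innzu1} in the single remaining case $n=3$. The key observation is that the triple $(a,b,c)=(C_{n-3,0},C_{n-4,2},C_{n-3,1})$ stated here is precisely the triple $(C_{n-2t-1,2t-2},C_{n-2t-2,2t},C_{n-2t-1,2t-1})$ of Theorem \ref{baer2} specialized to $t=1$, that the $(n-1)$-blocking hypothesis is the $(n-t)$-blocking hypothesis at $t=1$, and that an $(n-3,2)$-Baer cone is exactly the $(n-2t-1,2t)$-Baer cone at $t=1$. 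So the two statements become literally identical once $t=1$ is substituted.

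For $n\ge4$ I would therefore simply apply Theorem \ref{baer2} with $t=1$. The inequality $2\le2t\le n$ holds since $2t=2$ and $n\ge4$, and the ``moreover'' clause of Theorem \ref{baer2} guarantees the conclusion for all $q\ge4$ when $t=1$; as every square prime power $q=p^h$ satisfies $q\ge4$, the hypotheses are met and $K$ is an $(n-3,2)$-Baer cone.

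For $n=3$, where Theorem \ref{baer2} is silent, I would invoke Theorem \ref{innzu1}. The one thing to check is that the parameters collapse to the type appearing there: evaluating $C_{r,s}=\frac{\sqrt{q}^{\,s+1}-1}{\sqrt{q}-1}q^{r+1}+\frac{q^{r+1}-1}{q-1}$ gives $a=C_{0,0}=q+1$, $b=C_{-1,2}=q+\sqrt{q}+1$ and $c=C_{0,1}=\sqrt{q}^3+q+1$, which is exactly the type $(q+1,q+\sqrt{q}+1,\sqrt{q}^3+q+1)_2$. Since an $(n-1)$-blocking set with $n=3$ is a $2$-blocking (line-blocking) set and a Baer cone in $\PG(3,q)$ is precisely a $(0,2)$-Baer cone, Theorem \ref{innzu1} gives that $K$ is the $n=3$ instance of an $(n-3,2)$-Baer cone. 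The two cases cover all $n\ge3$, and there is no genuine obstacle here: the entire content resides in the two cited theorems, the only work being to confirm the parameter and terminology bookkeeping and that $n=3$ is exactly the case left uncovered by Theorem \ref{baer2}.
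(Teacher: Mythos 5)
Your proposal is correct and matches the paper's proof exactly: the paper obtains this corollary by "combining Theorems \ref{innzu1} and \ref{baer2}," i.e., Theorem \ref{baer2} with $t=1$ (using its ``moreover'' clause for $q\ge4$) handles $n\ge4$, and Theorem \ref{innzu1} handles $n=3$. Your explicit verification that the parameters $C_{0,0}=q+1$, $C_{-1,2}=q+\sqrt{q}+1$, $C_{0,1}=\sqrt{q}^3+q+1$ collapse to the type in Theorem \ref{innzu1} is exactly the bookkeeping the paper leaves implicit.
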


\subsection{Unital cones}
	We will now show that \Cref{innzu2} can be extended to unital cones with vertex of arbitrary dimension.
	\begin{theorem}\label{unital}
		 In $\Sigma=\PG(n,q)$, $n\ge4$ and $q$ a square prime power, an $(n-1)$-blocking set $K$ of type $(a,b,c)_{n-1}$, where $a=\theta_{n-2}, b=\theta_{n-3}+\sqrt{q}^{2n-3}$ and $c=\theta_{n-2}+\sqrt{q}^{2n-3}$, is a unital cone i.e. a cone with $(n-3)$-dimensional subspace as vertex and a unital as base.
	\end{theorem}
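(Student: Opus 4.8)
The plan is to keep the five-step skeleton used for \Cref{baer2}, but to abandon the appeal to the classification of small minimal blocking sets: an $(n-3)$-dimensional unital cone has $\theta_{n-2}+\sqrt{q}^{2n-1}$ points, which is far larger than the sizes covered by the results of \cite{bokler,boklerl}, so that tool is unavailable and the vertex must be produced by hand. As a preliminary \textbf{Step 1}, since $K$ is an $(n-1)$-blocking set its restriction to any hyperplane $\pi$ is an $(n-2)$-blocking set of $\pi\cong\PG(n-1,q)$; when $\pi$ is an $a$-hyperplane we have $|K\cap\pi|=a=\theta_{n-2}$, so by \cite{bose} $K\cap\pi$ is an $(n-2)$-dimensional subspace lying in $K$. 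These subspaces are the prospective tangent generators of the cone.

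\textbf{Step 2 (size and congruence).} Fixing such an $a$-hyperplane $\pi$ with $K\cap\pi=\alpha$, each of the remaining $q$ hyperplanes through the $(n-2)$-space $\alpha$ meets $K$ in at most $c$ points and hence contributes at most $c-a=\sqrt{q}^{2n-3}$ new points, giving $k\le\theta_{n-2}+\sqrt{q}^{2n-1}$, precisely the size of the target cone. Since $a\equiv b\equiv c\equiv\theta_{n-1}\equiv\theta_n\equiv 1 \pmod{q}$, \Cref{equivl} yields $k\equiv 1\pmod{q}$. A useful refinement is that two distinct hyperplanes through $\alpha$ meet exactly in $\alpha$, so their $K$-sections meet exactly in $\alpha$; writing the sections through $\alpha$ as $\alpha$ together with pairwise disjoint ``extra'' sets, I get $k=a+\sum_i e_i$ with $e_i\in\{0,\,b-a,\,c-a\}$, so that $k=\theta_{n-2}+\sqrt{q}^{2n-1}$ will hold exactly when every hyperplane through $\alpha$ other than $\pi$ is a $c$-hyperplane.

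\textbf{Step 3 (the vertex --- the crux).} The core of the argument is to show that all the $a$- and $c$-hyperplanes pass through one common $(n-3)$-dimensional subspace $V$, equivalently that the $b$-hyperplanes are exactly those avoiding $V$. Concretely I would prove that any two distinct $a$-sections meet in an $(n-3)$-space and that this intersection is independent of the chosen pair: a bound on the total size of the union of the $a$-sections, forced by $k\le\theta_{n-2}+\sqrt{q}^{2n-1}$, should rule out the possibility that two $a$-sections meet in only an $(n-4)$-space and should simultaneously cap the number of generators at $\sqrt{q}^{3}+1$. From the common $(n-3)$-space $V$ one then deduces $V\subseteq K$, that the ``no $b$-hyperplane through $\alpha$'' condition of Step 2 holds, hence $k=\theta_{n-2}+\sqrt{q}^{2n-1}$, and finally the cone property: for every $P\in K$ and every point $v\in V$ the whole line $Pv$ lies in $K$. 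This step, carried out purely by incidence counting without a ready-made structure theorem, is where I expect essentially all the difficulty to lie.

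\textbf{Step 4 (identifying the base).} Once $K$ is known to be a cone with an $(n-3)$-dimensional vertex $V\subseteq K$, I would project $K$ from $V$ onto a plane $\pi_0\cong\PG(2,q)$ skew to $V$. The hyperplanes through $V$ are exactly the $a$- and $c$-hyperplanes, and they correspond to the lines of $\pi_0$; an $a$-hyperplane projects its section to a single point and a $c$-hyperplane to $\tfrac{c-a}{q^{n-2}}+1=\sqrt{q}+1$ points, so the image $B=\mathrm{proj}_V(K)$ meets every line of $\pi_0$ in $1$ or $\sqrt{q}+1$ points. Since $|B|=(k-\theta_{n-3})/q^{n-2}=\sqrt{q}^{3}+1$, the set $B$ is of type $(1,\sqrt{q}+1)_1$ of the correct cardinality, i.e.\ an embedded unital, and therefore $K$ is a unital cone with vertex the $(n-3)$-space $V$, as required. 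The only routine point left here is to check that each projection fibre is a full generator, which is immediate from the cone property established in Step 3.
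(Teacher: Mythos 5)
Your skeleton (Bose--Burton on the $a$-hyperplanes, the upper bound $k\le\theta_{n-2}+\sqrt{q}^{2n-1}$, a vertex produced by hand rather than via Bokler's classification, then identification of the base) matches the paper's strategy, and your projection argument in Step 4 is a legitimate alternative to the paper's endgame, which instead cuts $K$ with a plane inside a $b$-hyperplane and invokes Bruen--Thas on a non-trivial minimal blocking set of size $\sqrt{q}^3+1$. But your Step 3, which you yourself flag as carrying ``essentially all the difficulty'', is a hope rather than an argument, and the quantitative inputs needed to execute it are missing from your Steps 1--2. The paper pins down $k$ \emph{before} any structural work, and it cannot do so with your congruence $k\equiv1\pmod q$, which is far too weak: it uses $k\equiv\theta_{n-3}\pmod{q^{n-2}}$ (from \Cref{equivl}, since $a\equiv b\equiv c\equiv\theta_{n-1}\equiv\theta_n\equiv\theta_{n-3}\pmod{q^{n-2}}$ and $\gcd(\theta_{n-3},q^{n-2})=1$), a lower bound $k\ge\theta_{n-1}$ (obtained by counting points of $K$ on the $q+1$ hyperplanes through an $(n-2)$-space $\gamma\subset\pi$ with $\gamma\ne\alpha$, each of which carries at least $q^{n-2}$ points of $K$ outside $\gamma$), and the negativity of the quadratic $t_c$ of \Cref{tc} on the interval $[\theta_{n-1}+q^{n-2},\,\theta_{n-3}+\sqrt{q}^{2n-1}]$, together with $t_b=-\sqrt{q}^3<0$ at $k=\theta_{n-1}$, to force $k=\theta_{n-2}+\sqrt{q}^{2n-1}$ exactly. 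Only then are the counts $t_a=\sqrt{q}^3+1$, $t_c=q^2-\sqrt{q}^3+q$, and the local distribution $u_a(\alpha)=1$, $u_b(\alpha)=0$, $u_c(\alpha)=q$ through each $a$-section available --- and these are precisely the numbers your Step 3 would need.

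Moreover, the difficulty in Step 3 is not located where you place it. That two \emph{distinct} $a$-sections meet in an $(n-3)$-space is free, with no size bound: $\alpha_1\cap\alpha_2=(K\cap\pi_1)\cap(K\cap\pi_2)=\alpha_1\cap\pi_2$, which has dimension $n-3$ because $\alpha_1\not\subset\pi_2$. The dangerous alternative is the other branch of the concurrency dichotomy: a family of $(n-2)$-spaces pairwise meeting in $(n-3)$-spaces either passes through a common $(n-3)$-space or lies entirely in one hyperplane, and a bound on the size of the union cannot exclude the second branch --- already three non-concurrent such spaces occupy at most $\theta_{n-3}+3q^{n-2}\le c$ points when $q\ge4$, so they fit comfortably inside a single $c$-hyperplane. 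The paper excludes this branch by counting $c$-hyperplanes: if all $a$-sections lay in one hyperplane $\pi$, then $\pi$ must be a $c$-hyperplane, each of the $\sqrt{q}^3+1$ sections lies on $q-1$ further $c$-hyperplanes, and these are pairwise distinct (a hyperplane containing two distinct sections contains their span, which is $\pi$ itself), whence $t_c\ge(q-1)(\sqrt{q}^3+1)+1>q^2-\sqrt{q}^3+q$, a contradiction. This again requires the exact values of $t_a$ and $t_c$, hence of $k$. So your proposal has a genuine gap at its central step; to close it you would essentially have to re-import the paper's intermediate steps (strong congruence, lower bound, determination of $k$, hyperplane distribution through each section, and the no-common-hyperplane count), after which your projection argument for the base does go through --- provided you have also shown that no $b$-hyperplane passes through the vertex, since otherwise the projected set could a priori have $\sqrt{q}$-secant lines and would not immediately be a unital.
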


	The proof of the theorem has been split into the following ten steps.

	\textbf{Step 1.} If $\pi$ is an $a$-hyperplane then $K\cap\pi$ is an $(n-2)$-dimensional subspace of $\Sigma$. 

	\begin{proof}\renewcommand{\qedsymbol}{}
		As $K$ is a line blocking set, so is its restriction to any subspace of $\Sigma$. Hence, for the hyperplane $\pi$, $K\cap\pi$ is an $(n-2)$-blocking set. Since $|K\cap\pi|=a=\theta_{n-2}$, by \cite{bose}, the assertion follows.
	\end{proof}

	\textbf{Step 2.} $k\le\theta_{n-2}+\sqrt{q}^{{2n-1}}$.

	\begin{proof}\renewcommand{\qedsymbol}{}
		Let $\pi$ be an $a$-hyperplane and $\alpha$ be the $(n-2)$-space $K\cap\pi$. Each of the $q$ hyperplanes containing $\alpha$, other than $\pi$, have at most $c-a=\sqrt{q}^{2n-3}$ points outside $\alpha$. Hence $$k\le a+q\sqrt{q}^{2n-3}=\theta_{n-2}+\sqrt{q}^{2n-1}.$$  
	\end{proof}

	\textbf{Step 3.} $k\equiv\theta_{n-3}$(mod $q^{n-2}$).

	\begin{proof}\renewcommand{\qedsymbol}{}
		Note that $a\equiv b\equiv c\equiv \theta_{n-1}\equiv \theta_n\equiv\theta_{n-3}$(mod $q^{n-2}$). Recall that $q$ is a prime power. Therefore $\theta_{n-3}$ and $q^{n-2}$ are coprime and by \Cref{equivl}, we have $k\equiv\theta_{n-3}$(mod $q^{n-2}$).
	\end{proof}


	
	

	\textbf{Step 4.} $k\ge \theta_{n-1}.$
	
	\begin{proof}\renewcommand{\qedsymbol}{}
		Let $\pi$ be an $a$-hyperplane and $\alpha$ be the $(n-2)$-space $K\cap\pi$. Consider an $(n-2)$-space $\gamma$ contained in $\pi$ other than $\alpha$. Then $K\cap\gamma$ contains $\theta_{n-3}$ points. Every hyperplane through $\gamma$ has at least $q^{n-2}$ points outside $\gamma$. Therefore $k\ge (q+1)q^{n-2}+\theta_{n-3}=\theta_{n-1}.$
	\end{proof}

	\textbf{Step 5.} $k=\theta_{n-2}+\sqrt{q}^{2n-1},t_a=\sqrt{q}^3+1,t_b=\theta_n-\theta_2$ and $t_c=q^2-\sqrt{q}^3+q$.
	
	\begin{proof}\renewcommand{\qedsymbol}{}
		From \Cref{tc}, we see that $t_c$ is a quadratic in $k$ with a positive leading coefficient as $a<b<c$. Therefore if $t_c$ is negative for two distinct values $k_0,k_1$ (with $k_0<k_1$) of $k$, then it is negative for any value in the interval $[k_0,k_1]$.
		
		Evaluating $t_c$ for the given values of $a,b,c$ and $k=\theta_{n-1}+q^{n-2},$ we get 
		\begin{equation*}
			t_c=\dfrac{1}{\sqrt{q}(q-1)}(-q^{n+1/2}+q^n+q^{n-1}+q^3-q^{5/2}+q^2+q^{3/2}-2q+q^{1/2}-2)
		\end{equation*}
		which is negative for all $n\ge4$ and $q\ge4$.
		
		Evaluating $t_c$ for the given values of $a,b,c$ and $k=\theta_{n-3}+\sqrt{q}^{2n-1}$, we get
		\begin{equation*}
			t_c=\dfrac{1}{(q-1)}(-q^{n}+q^{n-1/2}+q^{n-3/2}+q^3-q^{5/2}+q-q^{1/2}-1)
		\end{equation*}
		which is negative for all $n\ge 4$ and $q\ge 4$.
		
		Therefore $t_c$ is negative for $k$ in $[\theta_{n-1}+q^{n-2},\theta_{n-3}+\sqrt{q}^{2n-1}]$. This implies that either $k<\theta_{n-1}+q^{n-2}$ or $k>\theta_{n-3}+\sqrt{q}^{2n-1}$. By Steps 3 and 4 the former implies that $k=\theta_{n-1}$. But in this case, evaluating $t_b$ for the given values of $a,b$ and $c$, we see that $$t_b=-\sqrt{q}^3,$$ a contradiction. In the later case, using Step 2 and 3 we get $k=\theta_{n-2}+\sqrt{q}^{2n-1}$. The values of $t_a,t_b$ and $t_c$ now follow from Equations (\ref{ta}), (\ref{tb}) and (\ref{tc}) respectively.
 	\end{proof}

	\textbf{Step 6.} Let $\pi$ be an $a$-hyperplane and $\alpha$ be the $(n-2)$-space $K\cap\pi$. Let $u_i(\alpha)$ be the number of $i$-hyperplanes through $\alpha$. Then we have $u_a(\alpha)=1,u_b(\alpha)=0$ and $u_c(\alpha)=q.$

	\begin{proof}\renewcommand{\qedsymbol}{}
		Counting the hyperplanes through $\alpha$ we have
		\begin{equation}\label{adv1}
			u_a(\alpha)+u_b(\alpha)+u_c(\alpha)=q+1.
		\end{equation}
		And counting the points of $K$ contained in these hyperplanes we have
		\begin{equation}\label{adv2}
			\theta_{n-2}+(\sqrt{q}^{2n-3}-q^{n-2})u_b(\alpha)+\sqrt{q}^{2n-3}u_c(\alpha)=k.
		\end{equation}
		Solving Equations (\ref{adv1}) and (\ref{adv2}) for $k=\theta_{n-2}+\sqrt{q}^{2n-1}$, we get 
		\begin{equation}\label{advsol}
			\sqrt{q}u_a(\alpha)+u_b(\alpha)=\sqrt{q}.
		\end{equation}
		As $u_a(\alpha)\ge1$ and $u_b\ge0$, we have $u_a(\alpha)=1,u_b(\alpha)=0$ and $u_c(\alpha)=q.$
	\end{proof}

	\textbf{Step 7.} Let $S$ be the set of $(n-2)$-spaces defined by $$S:=\{K\cap\pi\mid \text{$\pi$ is an $a$-hyperplane}\}.$$ Then $|S|=\sqrt{q}^3+1$. Moreover for two distinct elements $\alpha_1,\alpha_2\in S$, dim$(\alpha_1\cap\alpha_2)=n-3$.

	\begin{proof}\renewcommand{\qedsymbol}{}
		By Step 6, we have that the $(n-2)$-dimensional spaces corresponding to different $a$-hyperplanes are distinct.  
		Therefore $|S|=t_a=\sqrt{q}^3+1$.
		
		Let $\pi_1$ and $\pi_2$ be two different $a$-hyperplanes with $\alpha_1=K\cap\pi_1$ and $\alpha_2=K\cap\pi_2$.
		As $\alpha_1$ is not contained in the hyperplane $\pi_2$, we have dim$(\alpha_1\cap\pi_2)=n-3$. But note that $\alpha_1\cap\pi_2=(K\cap\pi_1)\cap\pi_2=(K\cap\pi_1)\cap(K\cap\pi_2)=\alpha_1\cap\alpha_2$. Therefore we have dim$(\alpha_1\cap\alpha_2)=n-3$.
	\end{proof}

	\textbf{Step 8.} No hyperplane contains all the elements of $S$.

	\begin{proof}\renewcommand{\qedsymbol}{}
		Suppose to the contrary there is a hyperplane $\pi$ containing all the elements of $S$. Since $|S|>1$, $\pi$ is not an $a$-hyperplane. From Step $6$, we have $u_b(\alpha)=0$, for any $\alpha\in S$. Hence $\pi$ is not a $b$-hyperplane. So $\pi$ must intersect $K$ in $c$ points. Also we have $u_c(\alpha)=q$, for any $\alpha\in S$. Therefore, any $\alpha\in S$  is contained in $q-1$ $c$-hyperplanes different from $\pi$. Now counting all such hyperplanes, we have $t_c\ge (q-1)(\sqrt{q}^3+1)+1=\sqrt{q}^5-\sqrt{q}^3+q>q^2-\sqrt{q}^3+q$, a contradiction to Step 5.
	\end{proof}

	\textbf{Step 9.} $K$ is a cone.

	\begin{proof}\renewcommand{\qedsymbol}{}
		From Steps 7 and 8 it follows that all the elements of $S$ intersect in a unique $(n-3)$-dimensional space $\eta$ (a generalization of the fact that a set of mutually intersecting lines is either a set of lines through a point or a set of coplanar lines). Now counting all the points of $K$ contained in the elements of $S$, we see that $|S|(\theta_{n-2}-\theta_{n-3})+\theta_{n-3}=\sqrt{q}^{2n-1}+\theta_{n-2}=k$. Therefore $K$ is the pointset of $S$ which is a cone with vertex $\eta$. 
	\end{proof}

	\textbf{Step 10.} $K$ is an unital cone.

	\begin{proof}
		The number of hyperplanes through the vertex $\eta$ is $\theta_2$. Since $t_b=\theta_n-\theta_2>\theta_2$, there exists a $b$-hyperplane $\pi'$ not containing $\eta$. Let $\tau$ be a plane contained in $\pi'$ and disjoint from $\eta$ (this exists since $\eta\cap\pi'$ is $(n-4)$-dimensional). We prove the theorem by showing that $C:=K\cap \tau$ is a unital.
	
		Note that, for any $\alpha\in S$, $\alpha\cap \tau$ is a point. Hence $|C|=\sqrt{q}^3+1$. Note that $C$ doesn't contain any lines as otherwise there would exist hyperplanes intersecting $K$ in $\theta_{n-1}$ points, a contradiction. As $K$ is a line blocking set, $C$ is a line blocking set of $\tau$. Let $P$ be a point in $C$ and $\alpha_P$ be the unique element of $S$ such that $P=\tau\cap \alpha_P$. Let $\pi_P$ be the unique $a$-hyperplane containing $\alpha_P$. Then note that the line $\pi_P\cap \tau$ intersects $K$ in $K\cap(\pi_P\cap \tau)=\alpha_P\cap \tau=\{P\}$. Thus every point of $C$ is essential. In other words $C$ is a non-trivial minimal line blocking set of size $\sqrt{q}^3+1$ in $\tau$. Therefore, $C$ is an unital (see \cite{bruenthas}). 
	\end{proof}

	By Theorems \ref{innzu2} and \ref{unital}, we now have the following corollary.
	
	\begin{corollary}
		In $\Sigma=\PG(n,q)$, $n\ge3$ and $q$ a square prime power, an $(n-1)$-blocking set $K$ of type $(a,b,c)_{n-1}$, where $a=\theta_{n-2}, b=\theta_{n-3}+\sqrt{q}^{2n-3}$ and $c=\theta_{n-2}+\sqrt{q}^{2n-3}$, is a unital cone.
	\end{corollary}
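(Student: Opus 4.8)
The plan is to dispose of the corollary by a simple case distinction on the dimension $n$, exactly mirroring the proof of \Cref{baergen}. The two cited results together already cover the full range $n\ge 3$: \Cref{unital} handles every $n\ge 4$ verbatim, so for those dimensions the corollary requires no argument at all. The entire content therefore lies in the single remaining base case $n=3$, which I would reduce to the Innamorati--Zuanni theorem (\Cref{innzu2}).

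For $n=3$ the task is purely arithmetic: substitute $n=3$ into the three prescribed intersection numbers and check that they specialize to the type appearing in \Cref{innzu2}. Using $\theta_1=q+1$, $\theta_0=1$ and $\sqrt{q}^{2n-3}=\sqrt{q}^3$, one finds $a=\theta_1=q+1$, $b=\theta_0+\sqrt{q}^3=\sqrt{q}^3+1$ and $c=\theta_1+\sqrt{q}^3=\sqrt{q}^3+q+1$. Moreover an $(n-1)$-blocking set with $n=3$ is precisely a $2$-blocking set of $\PG(3,q)$. Hence $K$ is a $2$-blocking set of $\PG(3,q)$ of type $(q+1,\sqrt{q}^3+1,\sqrt{q}^3+q+1)_2$, and \Cref{innzu2} applies directly to conclude that $K$ is a unital cone.

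I do not expect any genuine obstacle here, since the result is a consolidation of the base case (\Cref{innzu2}) and the general case (\Cref{unital}) rather than a new argument. The only points that need a moment's care are the arithmetic specialization above and the consistency of terminology: the $(n-3)$-dimensional vertex of \Cref{unital} becomes a $0$-dimensional vertex, that is a single point, when $n=3$, so that the two theorems indeed describe the same object, namely a cone in $\PG(3,q)$ over an embedded unital in a plane with a point as vertex.
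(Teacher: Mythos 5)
Your proposal is correct and takes essentially the same approach as the paper, which derives the corollary by combining \Cref{innzu2} (the $n=3$ case) with \Cref{unital} (the $n\ge4$ case). Your arithmetic verification that the type $(a,b,c)_{n-1}$ specializes at $n=3$ to $(q+1,\sqrt{q}^3+1,\sqrt{q}^3+q+1)_2$ is exactly the (implicit) content of the paper's one-line justification.
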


\subsection{Hyperoval and maximal arc cones}
	Recall that for a cone $K$ in $\Sigma=\PG(n,q)$ with vertex $V$ all the possible bases are isomorphic to each other. If $V$ is $m$-dimensional with $m\ge1$ then any hyperplane $\pi$ of $\Sigma$ not containing $V$ intersects $K$ in a cone $K'$ with base isomorphic to that of $K$ and an $(m-1)$-dimensional vertex $V'=\pi\cap V$. Therefore all the hyperplanes not containing $V$ have the same number of points from $K$. The following lemma and corollary prove the converse which will be the key to proving the characterization theorems for hyperoval cones and maximal arc cones.  
	\begin{lemma}\label{lemcon}
		Let $P$ be a point in $\Sigma=\PG(n,q)$ and let $K$ be a set of points in $\Sigma$ such that every hyperplane of $\Sigma$ not containing $P$ intersects $K$ in a fixed number of points. Then $K\cup\{ P\}$ is a cone with $P$ as the vertex.
	\end{lemma}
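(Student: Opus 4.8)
The plan is to show that $K\cup\{P\}$ is a cone with vertex $P$ by proving the equivalent statement that every line $\ell$ through $P$ meets $K\setminus\{P\}$ in either $0$ or $q$ points; since $\ell\setminus\{P\}$ consists of exactly $q$ points, this says each such line is either disjoint from $K$ away from $P$ or entirely contained in $K\cup\{P\}$, which is precisely the cone condition. Writing $m$ for the fixed intersection number and, for a line $\ell$ through $P$, $s_\ell:=|K\cap(\ell\setminus\{P\})|\in\{0,1,\dots,q\}$, the goal becomes $s_\ell\in\{0,q\}$ for all of the $\theta_{n-1}$ lines $\ell$ through $P$. I would establish this through a first- and second-moment (variance) argument over these lines, the only available tool being the constant $m=|K\cap H|$ for each of the $\theta_n-\theta_{n-1}=q^n$ hyperplanes $H\not\ni P$.

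First I would compute $\sum_\ell s_\ell=|K\setminus\{P\}|$. Double counting incidences between the hyperplanes avoiding $P$ and the points of $K$ gives
\begin{equation*}
  m\,q^n=\sum_{H\not\ni P}|K\cap H|=\sum_{X\in K\setminus\{P\}}\#\{H\not\ni P:X\in H\}=q^{n-1}\,|K\setminus\{P\}|,
\end{equation*}
because a point $X\neq P$ lies on $\theta_{n-1}-\theta_{n-2}=q^{n-1}$ hyperplanes that avoid $P$ (subtracting those through the line $PX$), while $P$ lies on none. Hence $\sum_\ell s_\ell=|K\setminus\{P\}|=mq$.

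The heart of the argument is the second moment. Counting ordered pairs $(X,Y)\in(K\cap H)^2$ summed over the hyperplanes avoiding $P$ gives $\sum_{H\not\ni P}|K\cap H|^2=m^2q^n$, and I would re-sum this by pairs of points of $K$ according to their position relative to $P$. A diagonal pair $X=Y\neq P$ contributes $q^{n-1}$; a pair of distinct points collinear with $P$ contributes $0$ (any hyperplane through both contains their line, hence $P$); a pair of distinct points \emph{not} collinear with $P$ contributes $\theta_{n-2}-\theta_{n-3}=q^{n-2}$; pairs involving $P$ contribute $0$. Letting $M$ be the number of ordered pairs of distinct points of $K\setminus\{P\}$ not collinear with $P$, this yields $m^2q^n=mq\cdot q^{n-1}+M\,q^{n-2}$, so $M=m(m-1)q^2$. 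Since the total number of ordered distinct pairs in $K\setminus\{P\}$ is $mq(mq-1)$, the number $N$ of distinct pairs that \emph{are} collinear with $P$ equals $N=mq(mq-1)-m(m-1)q^2=mq(q-1)$. Now $\sum_\ell s_\ell^2$ counts pairs $(X,Y)$ in $K\setminus\{P\}$ lying on a common line through $P$ (including $X=Y$), that is, the $mq$ diagonal pairs plus $N$, so $\sum_\ell s_\ell^2=mq+mq(q-1)=mq^2$.

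Finally I would combine the two moments: $\sum_\ell s_\ell(q-s_\ell)=q\sum_\ell s_\ell-\sum_\ell s_\ell^2=mq^2-mq^2=0$. Each summand satisfies $s_\ell(q-s_\ell)\ge 0$ because $0\le s_\ell\le q$, so every summand vanishes and $s_\ell\in\{0,q\}$ for all lines $\ell$ through $P$, which is exactly the cone property. I expect the main obstacle to be the bookkeeping in the second-moment step: correctly enumerating, in each of the three geometric cases (equal points, distinct points collinear with $P$, distinct points non-collinear with $P$), the number of hyperplanes avoiding $P$ through the pair, and keeping $M$ and $N$ consistent with the total $mq(mq-1)$. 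One should also check the low-dimensional edge cases where $\theta_{n-3}$ or $\theta_{n-2}$ degenerate, but the identity $\theta_k-\theta_{k-1}=q^k$ makes the contributions uniform for all $n\ge 2$, and the distinction between $P\in K$ and $P\notin K$ is immaterial since $P$ is adjoined to form the cone.
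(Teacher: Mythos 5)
Your proof is correct, and it takes a genuinely different route from the paper's. The paper argues by descent on dimension: for an $(n-2)$-subspace $\mu$ with $P\notin\mu$, counting the points of $K$ in the $q+1$ hyperplanes through $\mu$ (exactly one of which, $\langle P,\mu\rangle$, contains $P$) gives $k=q(t-x)+y$, where $x=|K\cap\mu|$ and $y=|K\cap\langle P,\mu\rangle|$; since $k$, $t$ and $y$ do not change when $\mu$ is replaced by another hyperplane of $\langle P,\mu\rangle$ avoiding $P$, the number $x$ is the same for all such hyperplanes, so the hypothesis of the lemma is inherited by the subspace $\langle P,\mu\rangle$. Iterating, the constancy propagates down through subspaces containing $P$ until it reaches lines through $P$, where it literally says that the points of $\ell\setminus\{P\}$ are either all in $K$ or all outside $K$ --- the cone condition. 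Your argument is instead a one-shot global count: the first and second moments of the $s_\ell$ over the $q^n$ hyperplanes avoiding $P$, combined in $\sum_\ell s_\ell(q-s_\ell)=q\sum_\ell s_\ell-\sum_\ell s_\ell^2=0$, force $s_\ell\in\{0,q\}$. Your bookkeeping checks out: diagonal pairs contribute $q^{n-1}$ each, pairs collinear with $P$ contribute $0$ (any hyperplane through both contains their line, hence $P$), and non-collinear pairs contribute $\theta_{n-2}-\theta_{n-3}=q^{n-2}$ each, giving $M=m(m-1)q^2$, $N=mq(q-1)$ and $\sum_\ell s_\ell^2=mq^2$ as you state. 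What the paper's descent buys is economy of means: a single first-order count, repeated, with no enumeration of pairs. What your variance argument buys is that it avoids the iteration entirely and produces quantitative by-products along the way, namely $|K\setminus\{P\}|=mq$ and the exact distribution of line intersections through $P$. Both arguments are valid for all $n\ge2$, and your observation that $\theta_{-1}=0$ keeps the contribution $\theta_{n-2}-\theta_{n-3}$ uniform is exactly the degenerate-case check needed.
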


	\begin{proof}
		Let $k$ be the number of points in $K$ and suppose that every hyperplane that does not contain $P$ has $t$ points of $K$. Let $\mu$ be an $(n-2)$-dimensional subspace not containing $P$. Let $|K\cap\mu|=x$ and 	$|K\cap\langle P,\mu\rangle|=y$. Counting the points of $K$ in hyperplanes through $\mu$, we see that $k=q(t-x)+y$. Now if $\mu'$ is another hyperplane of $\langle P,\mu\rangle$ disjoint from $P$ with $|K\cap\mu'|=x'$, then $k=q(t-x')+y$. Thus $x=x'$ and we see that all hyperplanes of $\langle P,\mu\rangle$, not containing $P$, contain the same number of points of $K$. Repeating this argument multiple times we see that for a point $Q\ne P$, every hyperplane of the line $PQ$ disjoint from $P$ (i.e. a point of $PQ$ different from $P$) contains the same number of points from $K$. Hence if $Q\in K$, then all the points on the line $ PQ$ are contained in $K\cup P$ and if $Q\notin K$ then $\langle P,Q\rangle\cap K=\{P\}$. Hence $K\cup \{P\}$ is a cone with $P$ as the vertex.
	\end{proof}

	\begin{corollary}\label{corcon}
		Let $V$ be an $m$-dimensional subspace of $\Sigma=\PG(n,q)$ and	let $K$ be the set of points in $\Sigma$ such that every hyperplane of $\Sigma$ that does not contain $V$ in a fixed number of points, then $K\cup V$ is a cone with $V$ as the vertex.
	\end{corollary}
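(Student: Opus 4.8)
The plan is to reduce this statement to the single-point case already established in Lemma \ref{lemcon}, by applying that lemma at every point of the vertex $V$. Write $t$ for the common intersection size, so that $|K\cap H|=t$ for every hyperplane $H$ with $V\not\subseteq H$. The key observation is a containment of hyperplane families: for a fixed point $P\in V$, any hyperplane $H$ that misses $P$ automatically fails to contain $V$ (since $P\in V\setminus H$), and therefore $|K\cap H|=t$ as well. Hence, for each individual point $P\in V$, the set $K$ satisfies the hypothesis of Lemma \ref{lemcon} with the very same constant $t$, and the lemma yields that $K\cup\{P\}$ is a cone with vertex $P$. In particular, for every $P\in V$ and every $Q\in K$ the line $\langle P,Q\rangle$ is contained in $K\cup\{P\}$.

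Next I would assemble these pointwise cone structures into a single cone with vertex $V$. By the definition of a cone it suffices to show that for every $R\in K\cup V$ and every $S\in V$ the line $\langle R,S\rangle$ lies in $K\cup V$. If $R\in V$, then $\langle R,S\rangle\subseteq V\subseteq K\cup V$ because $V$ is a subspace. If instead $R\in K$, then applying the conclusion of Lemma \ref{lemcon} at the point $S\in V$ shows that $\langle R,S\rangle\subseteq K\cup\{S\}\subseteq K\cup V$. Since these two cases exhaust all admissible pairs, $K\cup V$ is a cone with vertex $V$.

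The argument is essentially bookkeeping and needs no induction on $\dim V$; the only point that requires care is confirming that the constant $t$ is genuinely uniform, i.e. that the fixed intersection number attached to each point $P\in V$ in the lemma really is the single global constant $t$ rather than a value depending on $P$. This is exactly what the family containment above guarantees, since all $P$-avoiding hyperplanes sit among the $V$-avoiding ones. The remaining details — that $V$ being a subspace gives closure under lines, and that degenerate pairs $R=S$ contribute nothing — are immediate, so I expect no substantive obstacle beyond this uniformity check.
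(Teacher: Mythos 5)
Your proof is correct and follows essentially the same route as the paper: both reduce to Lemma \ref{lemcon} via the observation that every hyperplane missing a point $P\in V$ necessarily misses $V$ (so the uniform constant $t$ transfers), and then assemble the pointwise cones into a cone with vertex $V$. Your extra case analysis for $R\in V$ and the explicit uniformity check are just bookkeeping that the paper leaves implicit.
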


	\begin{proof}
		Note that for any point $P\in V$, any hyperplane of $\Sigma$ not containing $P$ does not contain $V$. Hence from \Cref{lemcon} we have that $K\cup \{P\}$ is a cone with $P$ as the vertex. Thus we can see that for any point $Q$ in $K\setminus V$ the line $\langle P,Q\rangle$ is contained in $K\cup V$. As this is true for any point $P\in V$ and $Q\in \{K\setminus V\}$, we see that $K\cup V$ is a cone with $V$ as the vertex.
	\end{proof}

	The following theorem proves the characterization theorem for hyperoval cones in 3-dimensional projective spaces. Later we prove it for dimensions higher than $3$.

	\begin{theorem}\label{hyperO}
		In $\Sigma=\PG(3,q)$, a point set $K$ of type $(a,b,c)_2$, where $a=1,b=q+2$ and $c=2q+1$, is a cone over a hyperoval.
	\end{theorem}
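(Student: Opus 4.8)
The plan is to exhibit a single point as the vertex of a cone and then invoke \Cref{lemcon}. The target configuration, a cone over a hyperoval, has $k=q^2+2q+1$ points, and a short bookkeeping with \Cref{ta}, \Cref{tb} and \Cref{tc} records the intended counts $t_a=\tfrac{q(q-1)}{2}$, $t_b=q^3$, $t_c=\tfrac{(q+1)(q+2)}{2}$. Since $a=1$, a fixed $a$-plane $\pi_0$ meets $K$ in a single point $V:=K\cap\pi_0$, and this is the natural vertex candidate. The local fact I would record at once is that every line of $\pi_0$ not through $V$ is external to $K$: such a line meets $K\cap\pi_0=\{V\}$ but misses $V$, hence misses $K$ entirely.

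First I would pin down $k$. The three fundamental equations \Cref{1eq}, \Cref{2eq} and \Cref{3eq} alone do not determine $k$ (positivity and integrality of $t_a,t_b,t_c$ leave more than one candidate), so I would augment them with a per-point count: fixing $P\in K$ and summing $|K\cap\sigma|$ over the $\theta_2$ planes $\sigma$ through $P$ gives $(q+1)\beta_P+2q\gamma_P=(k-1)(q+1)$, where $\beta_P,\gamma_P$ are the numbers of $b$- and $c$-planes through $P$. For $q$ even this forces $\gamma_P$ to be a multiple of $q+1$, hence pins the number $\alpha_P$ of $a$-planes through $P$ into the arithmetic progression $\alpha_P=(q^2+q+2-k)+j_P(q-1)$ with $j_P\in\mathbb{Z}_{\ge0}$. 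Feeding this into the global identity $\sum_{P\in K}\alpha_P=t_a$ (each $a$-plane carries exactly one point of $K$) over-determines the system: for the wrong candidate values the admissible $\alpha_P$ collapse to a single possibility that is incompatible with $\sum_{P}\alpha_P=t_a$, leaving only $k=q^2+2q+1$.

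The heart of the argument is to show that every plane not through $V$ meets $K$ in exactly $q+2$ points; equivalently, that every $a$-plane and every $c$-plane passes through $V$, equivalently that every line through $V$ meets $K$ in $1$ or $q+1$ points. I would analyse pencils of planes: through an external line $e\subset\pi_0$ the $q+1$ planes (only $\pi_0$ of which contains $V$) have $K$-sizes summing to $k$, while through a line $g$ with $V\in g$ the $q+1$ planes all contain $V$ and their sizes sum to $k+q\,|K\cap g|$. Matching these pencil sums against the now-known values of $k$ and of $t_a,t_b,t_c$, and using that a line through $V$ with at least two points of $K$ forces every plane of its pencil to be a $b$- or $c$-plane, I would rule out the intermediate lines ($2\le|K\cap g|\le q$) and conclude the vertex property. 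This elimination — equivalently, proving that all the $a$-planes (and then all the $c$-planes) are concurrent at $V$ rather than merely spread over a handful of points of $K$ — is where I expect the real work to lie, since the counting constraints gathered so far only bound, and do not by themselves collapse, the set of points lying on $a$-planes.

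Once the vertex property holds, \Cref{lemcon} immediately gives that $K=K\cup\{V\}$ is a cone with vertex $V$. To identify the base, take a $b$-plane $\tau$ with $V\notin\tau$ and set $B:=K\cap\tau$, so $|B|=q+2$. For any line $\bar\ell\subset\tau$ the plane $\sigma=\langle V,\bar\ell\rangle$ passes through $V$, and because $K$ is a cone we have $|K\cap\sigma|=1+q\,|B\cap\bar\ell|\equiv 1\pmod q$; since $q+2\not\equiv 1\pmod q$, no plane through $V$ is a $b$-plane, so $|K\cap\sigma|\in\{1,2q+1\}$ and therefore $|B\cap\bar\ell|\in\{0,2\}$. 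As $\bar\ell$ was arbitrary, $B$ has line-type $(0,2)_1$, that is $B$ is a hyperoval (which in particular forces $q$ to be even), and $K$ is exactly the cone over $B$ with vertex $V$, as required.
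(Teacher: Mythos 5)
Your overall architecture matches the paper's: pin down $k=q^2+2q+1$, show that every plane missing the vertex is a $b$-plane, invoke \Cref{lemcon}, and read off the hyperoval base (your final paragraph, with the mod-$q$ argument showing no plane through $V$ can be a $b$-plane, is correct and essentially the paper's last step). But the two load-bearing steps are asserted, not proved. First, the determination of $k$: your claim that the per-point counts $\alpha_P=(q^2+q+2-k)+j_P(q-1)$ together with $\sum_{P\in K}\alpha_P=t_a$ ``over-determine the system'' and leave only $k=q^2+2q+1$ is never substantiated --- you neither enumerate the competing candidates nor exhibit a contradiction for any of them. Moreover the congruence step ``for $q$ even this forces $(q+1)\mid\gamma_P$'' assumes $q$ is even, which is not a hypothesis of the theorem but a consequence of it (in the paper it emerges as $u_a=q/2$); for odd $q$ you only get $\frac{q+1}{2}\mid\gamma_P$ and your argument as written does not run. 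The paper instead extracts $(q+1)\mid k$ and $q\mid(k-1)(k-2)$ from integrality of $t_a$ in \Cref{ta}, proves $k\le 2q^2+1$ geometrically (a pencil through a line in a $1$-plane), reduces to the three candidates $k\in\{2q+2,\ q^2+2q+1,\ q^2+3q+2\}$, and eliminates the two wrong ones; note that killing $k=q^2+3q+2$ needs a further integrality argument and a separate ad hoc count at $q=4$, cases your sketch would also have to confront.

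Second, and more seriously, the heart of the theorem --- that \emph{all} $a$-planes and $c$-planes pass through $V$, so that every plane off $V$ meets $K$ in $b$ points --- is exactly the step you defer (``where I expect the real work to lie''). A plan to ``match pencil sums'' is not a proof, and you yourself note that your constraints only bound the configuration. The paper's actual mechanism is short but specific: for each of the $q+1$ lines $l$ through $V$ inside the $1$-plane $\pi_0$, the pencil count $1+(q+1)u_b+2qu_c=q^2+2q+1$ gives $u_b\equiv 0 \pmod{q}$, hence $u_b\in\{0,q\}$; the case $u_b=q$ forces $u_a=u_c=1/2$, impossible; so $u_b=0$, $u_a=q/2$, $u_c=(q+2)/2$. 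Summing over the $q+1$ lines accounts for exactly $(q+1)(q/2-1)+1=t_a$ $a$-planes and $(q+1)(q+2)/2=t_c$ $c$-planes through $V$, and since $t_a+t_c=\theta_2$ is the total number of planes through $V$, every $a$- and $c$-plane is concurrent at $V$ and every plane avoiding $V$ is a $b$-plane, at which point \Cref{lemcon} applies. Until you supply an argument of this kind, your proposal is a correct outline with its two critical steps missing.
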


	The proof of the theorem has been split into the following five steps.

	\textbf{Step 1.} $(q+1)|k$ and $q$ divides either $k-1$ or $k-2$.
	
	\begin{proof}\renewcommand{\qedsymbol}{}
		Putting the values of $a,b,c$ in \Cref{ta} we get
		\begin{multline}\label{ttt}
			t_a=\dfrac{1}{(q+1)(2q)}(k^2(q+1)-k(q+1+(3q+2)(q^2+q+1))+(q+2)(2q+1)(q^3+q^2+q+1).
		\end{multline}
		As $t_a$ is an integer we have $(q+1)|k$, and $q|(k-1)(k-2)$. As $k-1$ and $k-2$ are coprime and $p$ is the only prime dividing $q$ we have $q|(k-1)$ or $q|(k-2)$.	
	\end{proof}
	
	\textbf{Step 2.} $k\le 2q^2+1$.

	\begin{proof}\renewcommand{\qedsymbol}{}
		Let $\pi$ be a $1$-plane with $K\cap\pi=\{P\}$ and $l$ be a line through $P$ contained in $\pi$. Since each of the $q$ planes through $l$ different from $\pi$ has at most $2q$ points of $K\setminus\{P\}$, we have $k\le q(2q)+1$.
	\end{proof}

	\textbf{Step 3.} $k=q^2+2q+1$.
	
	\begin{proof}\renewcommand{\qedsymbol}{}
		From Step 1, $k=\lambda(q+1)$. From Step 2 we have $c=2q+1\le k \le 2q^2+1$, so $2\le\lambda\le 2q-2$ for $q\ge3$ and $2\le\lambda\le3$ for $q=2$. 
		
		From Step 1 we also have $(q+1)\lambda-1=\xi q$ or $(q+1)\lambda-2=\nu q.$ Therefore $\lambda\equiv1$(mod $q$) or $\lambda\equiv2$(mod $q$). Hence $\lambda=2,q+1$ or $q+2$ resulting in $k=2q+2,k=q^2+2q+1$ or $k=q^2+3q+2$ respectively.
		
		If $k=2q+2$, $t_c=-(q^2+2q+1)/2<0$ for all values of $q$, a contradiction to the fact that $t_c\ge1$.
		
		If $k=q^2+3q+2$, then we have $$t_a=q/2+1$$ and $$t_b=\dfrac{q^4-q^3-4q^2-4q-1}{q-1}.$$ For $t_a$ and $t_b$ to be integers we must have $2|q$ and $(q-1)|9$. The only possible integer values being $q=2$ or $q=4.$ 
		
		If $q=2,$ $t_b=-17$, a contradiction to the fact that $t_b\ge1$.
		
		If $q=4$, consider a line $l'$ in a $1$-plane with $l'\cap K=\emptyset$. Let $u_i'$ denote the number of $i$-planes through $l'$. Counting these planes and all the points of $K$ through these planes we get 
		\begin{equation}\label{prob1}
			u_a'+u_b'+u_c'=q+1=5,
		\end{equation}  
		and 
		\begin{equation}\label{prob2}
			u_a'+6u_b'+9u_c'=k=30
		\end{equation}
		with $u_a'\ge1$. Since $u_a'\equiv0$(mod $3$) from \Cref{prob2}, we find $u_a'=3$ and it easily follows that there are no integer solution to the Equations (\ref{prob1}) and (\ref{prob2}) with $u_a'\ge1, u_b'\ge0$ and $u_c'\ge0$. Therefore $q=4$ is not a possible choice for $q$.
		
		We conclude that $k\ne 2q+2,k\ne q^2+3q+2$, so $k=q^2+2q+1$.
	\end{proof}
	
	\textbf{Step 4.} $K$ is a cone.
	
	\begin{proof}\renewcommand{\qedsymbol}{}
		Putting the given values of $a,b,c$ and $k=q^2+2q+1$ in Equations (\ref{ta}), (\ref{tb}) and (\ref{tc}) we get $$t_a=\dfrac{q^2-q}{2}$$$$t_b=q^3$$ and $$t_c=\dfrac{q^2+3q+2}{2}.$$ 
		
		Let $l$ and $\pi$ be as in Step 2 and $u_i$ denote the number of $i$-planes through $l$. Counting the points of $K$ through the planes containing $l$ we have
		\begin{equation}\label{uuu}
		1+u_b(q+1)+u_c(2q)=k=q^2+2q+1.
		\end{equation}
		Considering \Cref{uuu} modulo $q$ we have $u_b\equiv 0$(mod $q$). Hence $u_b=0$ or $u_b=q$. 
		
		If $u_b=q, u_a=u_c=1/2$ from \Cref{uuu}, a contradiction. 
		
		Therefore $u_b=0, u_a=q/2,u_c=(q+2)/2$, i.e. through every line $l$ in $\pi$ with $l\cap K=\{P\}$ there are $q/2$ 1-planes and $(q+2)/2$ $c$-planes. 
		
		Note that there are $q+1$ different choices for $l$ in $\pi$. Counting all the $1$-planes through $P$ we see that there are $(q+1)(q/2-1)+1=({q^2-q})/{2}$ of them. Similarly there are $(q+1)((q+2)/2)=({q^2+3q+2})/{2}$ $c$-planes through $P$. Since $t_a=({q^2-q})/{2}$, $t_c=({q^2+3q+2})/{2}$ and $t_a+t_c=\theta_2$ is the total number of planes through $P$, it follows that none of the planes through $P$ intersect $K$ in $b$ points and any plane not containing $P$ is a $b$-plane. Hence by \Cref{lemcon}, $K$ is a cone with vertex $P$.
	\end{proof}
		
	\textbf{Step 5.} $K$ is a hyperoval cone.
	
	\begin{proof}
		As $K$ is a cone with vertex $P$, all $c$-planes intersect $K$ in a pair of lines through $P$. Let $\mu$ be a plane not passing through $P$. A $1$-plane intersects $\mu$ in a line containing zero points of $K$ and a $c$-plane intersects $\mu$ in a line containing two points of $K$. Thus $K\cap\mu$ is a set of $q+2$ points intersecting every line in either 0 or 2 points. Therefore $\mu$ is a hyperoval and $K$ is a hyperoval cone with vertex $P$. 
	\end{proof}
	
	\begin{theorem}\label{hyperO2}
		In $\Sigma=\PG(n,q)$, $n\ge4,$ a plane blocking set $K$ of type $(a,b,c)_{n-1}$, where $a=\theta_{n-3},b=\theta_{n-2}+q^{n-3},c=\theta_{n-2}+q^{n-2}$, is a cone with an $(n-3)$-space as vertex and a hyperoval as base.
	\end{theorem}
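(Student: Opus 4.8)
The plan is to follow the template of \Cref{unital} and \Cref{hyperO}: pin down $k$ together with $t_a,t_b,t_c$, exhibit an $(n-3)$-dimensional vertex $V\subseteq K$, show that the only hyperplanes avoiding $V$ are the $b$-hyperplanes, and then quote \Cref{corcon}. Note that the presence of a hyperoval forces $q$ to be even; this parity is exactly what makes the decisive divisibility step go through, so we assume it throughout.

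I would first treat the $a$-hyperplanes. If $\pi$ is an $a$-hyperplane then, since $K$ blocks planes, $K\cap\pi$ is an $(n-3)$-blocking set of $\pi\cong\PG(n-1,q)$ of the Bose--Burton minimum size $\theta_{n-3}$, so by \cite{bose} it is an $(n-3)$-dimensional subspace. Fix one such $\pi_0$ and put $V:=K\cap\pi_0$. Every $(n-2)$-space $W$ with $V\subset W\subset\pi_0$ then satisfies $K\cap W=V$, and running the pencil of the $q+1$ hyperplanes through $W$ gives the crude bound $k\le\theta_{n-3}+2q^{n-1}$. On the other hand $a\equiv b\equiv c\equiv\theta_{n-1}\equiv\theta_{n-4}\pmod{q^{n-3}}$ with $\theta_{n-4}$ coprime to $q^{n-3}$, so \Cref{equivl} gives $k\equiv\theta_n\equiv\theta_{n-4}\pmod{q^{n-3}}$. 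Feeding these into \Cref{ta}, where $t_a$ is a quadratic in $k$ with positive leading coefficient that must remain $\ge 1$, I expect to force $k=\theta_{n-3}+(q+2)q^{n-2}$ exactly as the corresponding step does in the earlier proofs, and then $t_a=\tfrac{q^2-q}{2}$, $t_b=\theta_n-\theta_2$ and $t_c=\tfrac{q^2+3q+2}{2}$.

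The heart of the argument, and the step I expect to be the main obstacle, is to show that no $b$-hyperplane contains $V$. For a fixed $W$ with $V\subset W\subset\pi_0$, the $q$ hyperplanes through $W$ other than $\pi_0$ partition $K\setminus V$, so if $y_a,y_b,y_c$ count them by type then $y_a+y_b+y_c=q$ and $y_b\,q^{n-3}(q+1)+y_c\,2q^{n-2}=k-\theta_{n-3}=(q+2)q^{n-2}$. Dividing by $q^{n-3}$ and solving for $y_c$ shows that $2q$ must divide $y_b(q+1)$; since $q$ is even, $q+1$ is odd and hence coprime to $2q$, forcing $2q\mid y_b$ and so $y_b=0$. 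As every hyperplane $H\supset V$ with $H\ne\pi_0$ contains the $(n-2)$-space $W=H\cap\pi_0$, this proves that no $b$-hyperplane passes through $V$. Counting the points of $K\setminus V$ over the $\theta_2$ hyperplanes through $V$ then gives exactly $\tfrac{q^2-q}{2}$ $a$-hyperplanes and $\tfrac{q^2+3q+2}{2}$ $c$-hyperplanes through $V$, which by the previous step is all of them. Hence every hyperplane avoiding $V$ meets $K$ in exactly $b$ points, and since $V\subseteq K$, \Cref{corcon} shows that $K$ is a cone with vertex $V$.

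Finally I would identify the base. Choose a plane $\tau$ disjoint from $V$; the cone structure gives $|K\cap\tau|=(k-\theta_{n-3})/q^{n-2}=q+2$. A line $\ell\subset\tau$ spans with $V$ a hyperplane $\langle V,\ell\rangle\supset V$, which is therefore an $a$- or a $c$-hyperplane. In the first case $K\cap\langle V,\ell\rangle=V$ is disjoint from $\tau$, so $\ell$ misses $K\cap\tau$, while in the second $K\cap\langle V,\ell\rangle$ consists of $V$ together with two generators and $\ell$ meets $K\cap\tau$ in two points. Thus $K\cap\tau$ is a set of $q+2$ points of type $(0,2)_1$, i.e. a hyperoval, and $K$ is a cone over a hyperoval with $(n-3)$-dimensional vertex, as required. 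The role of the parity of $q$ in the divisibility step is consistent with hyperovals existing only when $q$ is even.
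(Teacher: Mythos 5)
Your skeleton coincides with the paper's: Bose--Burton on the $a$-hyperplanes, the pencil bound $k\le\theta_{n-3}+2q^{n-1}$, the congruence from \Cref{equivl}, pinning down $k$ via the quadratics, showing every hyperplane through $V$ is an $a$- or $c$-hyperplane, and finishing with \Cref{corcon} and the type-$(0,2)_1$ argument for the base. But two of your steps have genuine gaps.

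First, $k$ cannot be pinned down using \Cref{ta} alone: the condition $t_a\ge1$ does not exclude the lower part of the admissible range. Take $n=4$, $q=2$, so $a=3$, $b=9$, $c=11$; the values allowed by your bound and congruence are $k\in\{11,13,15,17,19\}$, and \Cref{ta} gives $t_a=\tfrac{1}{48}(7k^2-292k+3069)$, which equals $\tfrac{44}{3},\ \tfrac{19}{2},\ \tfrac{11}{2},\ \tfrac{8}{3},\ 1$ at these five values --- never below $1$, so your criterion rules out nothing (and for $q=2$ there is no upper interval at all, since $\theta_{n-1}+q^{n-2}=2q^{n-1}+\theta_{n-3}$). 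What kills the spurious values is the other quadratic: from \Cref{tc}, $t_c=\tfrac{1}{16}(7k^2-172k+837)$ equals $-13,\ -\tfrac{27}{2},\ -\tfrac{21}{2},\ -4$ at $k=11,13,15,17$. The paper's proof accordingly uses $t_c<0$ on $[c,\theta_{n-1}+q^{n-2}-q^{n-3}]$ and $t_a<1$ on $[\theta_{n-1}+q^{n-2}+q^{n-3},2q^{n-1}+\theta_{n-3}]$; you need both quadratics, and the one you named handles only the half of the range that is empty in the $q=2$ case.

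Second, assuming $q$ even ``throughout'' is circular: the parity of $q$ is a consequence of the conclusion (hyperovals exist only for even $q$), not a hypothesis of the theorem. For odd $q$ the theorem asserts that no plane blocking set of this type exists, and that still requires proof; your argument is silent there, and your divisibility step genuinely needs the parity (you must have $2q\mid q^2+2q$, i.e. $2\mid q$, before concluding $2q\mid y_b(q+1)$). The repair is to work modulo $q$ rather than $2q$: from $y_b(q+1)+2qy_c=q^2+2q$ one gets $y_b\equiv0\ (\mathrm{mod}\ q)$ for every $q$, hence $y_b\in\{0,q\}$; the case $y_b=q$ forces $y_c=\tfrac{1}{2}$, impossible, so $y_b=0$ and then $y_a=\tfrac{q}{2}$, $y_c=\tfrac{q+2}{2}$. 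Integrality of these counts now produces the evenness of $q$ as a byproduct, and for odd $q$ it contradicts the existence of $K$, so the theorem holds vacuously. This is exactly how the paper proceeds, and it is the content of the remark following the theorem.
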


	The proof of the theorem has been split into the following six steps.
	
	\textbf{Step 1.} If $\pi$ is an $a$-hyperplane then $K\cap\pi$ is an $(n-3)$-dimensional subspace $V$ of $\Sigma$. 
	
	\begin{proof}\renewcommand{\qedsymbol}{}
		As $K$ is a plane blocking set in $\Sigma$, so is its intersection with any subspace. Since $|K\cap\pi|=\theta_{n-3}$, by \cite{bose}, $K\cap\pi$ is an $(n-3)$-dimensional subspace of $\Sigma$.
	\end{proof}

	\textbf{Step 2.} $k\le 2q^{n-1}+\theta_{n-3}$.
	
	\begin{proof}\renewcommand{\qedsymbol}{}
		Let $\pi$ and $V$ be as in Step 1. Let $\eta$ be an $(n-2)$-dimensional subspace of $\Sigma$ contained in $\pi$ and containing $V$. Since each of the $q$ hyperplanes through $\eta$ different from $\pi$ has at most $c-\theta_{n-3}=2q^{n-2}$ points of $K\setminus V$, we have $k\le q(2q^{n-2})+\theta_{n-3}=2q^{n-1}+\theta_{n-3}$.
	\end{proof}

	\textbf{Step 3.} $k\equiv\theta_{n-4}$(mod $q^{n-3}$).

	\begin{proof}\renewcommand{\qedsymbol}{}
		Note that $a\equiv b\equiv c\equiv \theta_n\equiv\theta_{n-1}\equiv\theta_{n-4}$(mod $q^{n-3}$). As $q$ is a prime power, $\theta_{n-4}$ and $q^{n-3}$ are coprime. Therefore by \Cref{equivl} we have $k\equiv\theta_{n-4}$(mod $q^{n-3}$).
	\end{proof}

	\textbf{Step 4.} $k=\theta_{n-1}+q^{n-2}, t_a=(q^2-q)/2, t_b=\theta_n-\theta_2,t_c=(q^2+3q+2)/2$.

	\begin{proof}\renewcommand{\qedsymbol}{}
		Note that $c\le k\le 2q^{n-1}+\theta_{n-3}$ from Step 2. Hence if we show that $k$ cannot assume any value in the intervals $[c,\theta_{n-1}+q^{n-2}-q^{n-3}]$ for all $n\ge4,q\ge2$ and $[\theta_{n-1}+q^{n-2}+q^{n-3},2q^{n-1}+\theta_{n-3}]$ for all $n\ge4,q\ge2$, by step 3, we can conclude that $k=\theta_{n-1}+q^{n-2}$. However, for $q=2$, we have $\theta_{n-1}+q^{n-2}=2q^{n-1}+\theta_{n-3}$. Therefore the interval $[\theta_{n-1}+q^{n-2}+q^{n-3},2q^{n-1}+\theta_{n-3}]$ is empty for $q=2$. So proving $k$ cannot assume values in $[\theta_{n-1}+q^{n-2}+q^{n-3},2q^{n-1}+\theta_{n-3}]$ for $n\ge4,q\ge3$ is sufficient.
	
		Recall that $t_a$ and $t_c$ are quadratic expressions in $k$ with positive coefficients for $k^2$ as $a<b<c$. Therefore if $t_a\le\lambda$ (similarly $t_c\le\lambda$) for two distinct values $x,y$ (with $x<y$) of $k$, then $t_a\le\lambda$ (similarly $t_c\le\lambda$) in the interval $[x,y]$.
	
		For $k=c,$
		\begin{align*}
			t_c=\dfrac{-q^{n+1}+q^3-2q+2}{2(q-1)^2},
		\end{align*}
		which is negative for all $n\ge4,q\ge2$.
	
		For $k=\theta_{n-1}+q^{n-2}-q^{n-3},$
		\begin{align*}
			t_c=\dfrac{-q^{n+1}-q^n+q^{n-1}+q^5+q^4-4q^3+5q-2}{2q(q-1)^2},
		\end{align*}
		which is negative for all $n\ge4,q\ge2$.
		
		Therefore $t_c$ is negative for $k$ in the interval $[c,\theta_{n-1}+q^{n-2}-q^{n-3}]$ for all $n\ge4,q\ge2$. Hence $k\notin[c,\theta_{n-1}+q^{n-2}-q^{n-3}]$. 
	
		For $k=\theta_{n-1}+q^{n-2}+q^{n-3},$
		\begin{align*}
			t_a=\dfrac{-q^{n-1}+\theta_{n-3}+q^3}{2(q+1)},
		\end{align*}
		which is $\le 1/2$ for all $n\ge4,q\ge2$.
	
		For $k=2q^{n-1}+\theta_{n-3},$
		\begin{align*}
			t_a=\dfrac{-q^{n}+\theta_{n-1}+2q^3-3q^2+q+1}{2(q+1)},
		\end{align*}
		which is negative for all $n\ge4,q\ge3$. 
		
		Therefore $t_a\le1/2$ for $k$ in the interval $[\theta_{n-1}+q^{n-2}+q^{n-3},2q^{n-1}+\theta_{n-3}]$ for all $n\ge 4, q\ge3$. Hence $k\notin[\theta_{n-1}+q^{n-2}+q^{n-3},2q^{n-1}+\theta_{n-3}]$ for all $n\ge4,q\ge3$. 
		
		Therefore we have shown that $k=\theta_{n-1}+q^{n-2}.$
		
		The values of $t_a,t_b$ and $t_c$ follow from the Equations (\ref{ta}), (\ref{tb}) and (\ref{tc}) respectively.
	\end{proof}

	\textbf{Step 5.} Any hyperplane containing $V$ intersects $K$ in either $a$ or $c$ points and any hyperplane not containing $V$ intersects $K$ in $b$ points.	

	\begin{proof}\renewcommand{\qedsymbol}{}
		Let $\eta$ and $\pi$ be as in Step 2 and let $u_i$ denote the number of $i$-hyperplanes through $\eta$. Counting the points of $K$ through the hyperplanes containing $\eta$, we get 
		\begin{align}\label{eta3}
			&a+(b-a)u_b+(c-a)u_c=k=\theta_{n-1}+q^{n-2} \nonumber\\
			\Rightarrow&(1+q)u_b+2qu_c=2q+q^2.
		\end{align}
		
		Considering \Cref{eta3} modulo $q$, we have 
		\begin{equation*}
			u_b\equiv0(\text{mod }q).
		\end{equation*}
		Therefore $u_b=0$ or $q$. 
		
		Putting $u_b=q$ gives $u_a=u_c=1/2$, a contradiction. 
		
		Hence $u_a=q/2, u_b=0 $ and $u_c=(q+2)/2$, i.e. through every $(n-2)$-dimensional subspace $\eta$ in $\pi$ with $\eta\cap K= V$ there are $q/2$ $a$-hyperplanes and $(q+2)/2$ $c$-hyperplanes.

		The rest of the proof follows similar to the proof of Step 4, \Cref{hyperO}, with $l$ and $P$ replaced by $\eta$ and $V$.
%
	\end{proof}

	\textbf{Step 6.} $K$ is a hyperoval cone.

	\begin{proof}
		From \Cref{corcon} it follows that $K$ is a cone with vertex $V$ and a plane $\tau$ containing $q+2$ points as base. 
		
		The rest of the proof follows similar to Step 5 of \Cref{hyperO}, with $P$ and $\mu$ replaced by $V$ and $\tau$ respectively.
%
	\end{proof}
	
	\begin{remark}
		Note that we get $u_a=q/2$ in Theorems \ref{hyperO} and \ref{hyperO2}, proving the well-known fact that hyperovals exist only if $q$ is even.
	\end{remark}

	The following theorem generalizes hyperoval cones to maximal arc cones but with added restrictions. In order to avoid dealing with trivial cases we assume that $2\le d\le q-1$ where $d$ is the degree of the maximal arc in the base of the cone. Although the proof of the theorem is similar to that of \Cref{hyperO2}, we do it separately to avoid dealing with the special cases formed by merging the proofs.
	
	\begin{theorem}\label{maximalA}
		In $\Sigma=\PG(n,q)$, $n\ge 5,$ a plane blocking set of type $(a,b,c)_{n-1}$, where $a=\theta_{n-3},b=q^{n-3}(qd+d-q)+\theta_{n-4},c=q^{n-2}d+\theta_{n-3}$ with $2\le d\le q-1$ and gcd$(d-1,q)=1$
		, is a cone with an $(n-3)$-dimensional space as vertex and a maximal arc of degree $d$ as base.
	\end{theorem}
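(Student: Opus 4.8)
The plan is to follow the six-step architecture of \Cref{hyperO2} essentially verbatim, replacing the hyperoval parameter $2$ by the degree $d$ and invoking the hypothesis $\gcd(d-1,q)=1$ at the single place where "$q$ even" was used for hyperovals. Write $k^\ast:=(qd+d-q)q^{n-2}+\theta_{n-3}$ for the size of the conjectured cone. First I would show (Step~1) that every $a$-hyperplane $\pi$ meets $K$ in an $(n-3)$-space $V$: since $K$ blocks every plane of $\Sigma$, $K\cap\pi$ blocks every plane of $\pi\cong\PG(n-1,q)$, and as $|K\cap\pi|=\theta_{n-3}$ is exactly the Bose--Burton minimum for a plane blocking set in $\PG(n-1,q)$, \cite{bose} forces $K\cap\pi=V$ to be an $(n-3)$-subspace. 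Next (Step~2), fixing such a $\pi$ and an $(n-2)$-space $\eta$ with $V\subseteq\eta\subseteq\pi$, counting points of $K$ over the $q$ hyperplanes through $\eta$ other than $\pi$, each contributing at most $c-\theta_{n-3}=dq^{n-2}$ new points, yields $k\le dq^{n-1}+\theta_{n-3}$. For the divisibility (Step~3) note $a\equiv b\equiv c\equiv\theta_{n-1}\equiv\theta_n\equiv\theta_{n-4}\pmod{q^{n-3}}$ with $\gcd(\theta_{n-4},q^{n-3})=1$, so \Cref{equivl} gives $k\equiv\theta_{n-4}\pmod{q^{n-3}}$.

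Step~4 is the computational heart and the main obstacle. As in \Cref{hyperO2}, $t_a$ and $t_c$ from \Cref{ta} and \Cref{tc} are upward-opening quadratics in $k$. I would evaluate $t_c$ at $k=c$ and at $k=k^\ast-q^{n-3}$ to show it is negative on $[c,\,k^\ast-q^{n-3}]$, and evaluate $t_a$ at $k=k^\ast+q^{n-3}$ and at the Step~2 bound $k=dq^{n-1}+\theta_{n-3}$ to show $t_a<1$ on $[k^\ast+q^{n-3},\,dq^{n-1}+\theta_{n-3}]$, each contradicting $t_a,t_c\ge1$. Since $c<k^\ast-q^{n-3}$ and $k^\ast+q^{n-3}<dq^{n-1}+\theta_{n-3}$ (both using $2\le d\le q-1$), these two forbidden intervals leave only the window $\{k:|k-k^\ast|<q^{n-3}\}$, and the congruence $k\equiv\theta_{n-4}\equiv k^\ast\pmod{q^{n-3}}$ then isolates $k=k^\ast$. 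I expect this to be exactly where $n\ge5$ is genuinely needed: the modulus $q^{n-3}$ must be large enough relative to the spread of $k$ for the window to contain a single admissible residue, which fails at $n=4$. The exact values of $t_a,t_b,t_c$ then follow from \Cref{ta}--\Cref{tc}.

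For Step~5 I would again fix $\pi,V$ and a variable $(n-2)$-space $\eta$ with $V\subseteq\eta\subseteq\pi$ (so $K\cap\eta=V$) and count points of $K$ over the $q+1$ hyperplanes through $\eta$. Writing $u_i$ for the number of $i$-hyperplanes through $\eta$, the relations $u_a+u_b+u_c=q+1$ and $(b-a)u_b+(c-a)u_c=k^\ast-\theta_{n-3}$ reduce, after dividing by $q^{n-3}$ and reading modulo $q$, to $(d-1)u_b\equiv0\pmod q$. Here the hypothesis $\gcd(d-1,q)=1$ forces $u_b\equiv0\pmod q$, hence $u_b\in\{0,q\}$; the value $u_b=q$ leaves no integral solution with $u_a\ge1$, so $u_b=0$, giving $u_a=q/d$ and $u_c=q+1-q/d$. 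In particular this recovers $d\mid q$, the maximal-arc analogue of the remark that hyperovals force $q$ even. Counting the $a$- and $c$-hyperplanes through $V$ against the total $\theta_2$ of hyperplanes through $V$, and comparing with the values of $t_a,t_c$ from Step~4, shows that every hyperplane through $V$ is an $a$- or $c$-hyperplane while every hyperplane missing $V$ is a $b$-hyperplane.

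Finally (Step~6), since every hyperplane not containing $V$ meets $K$ in the fixed number $b$ of points, \Cref{corcon} shows $K$ (which already contains $V$) is a cone with vertex $V$. Choosing a plane $\tau$ disjoint from $V$, which exists because $\dim V+\dim\tau+1=n$, and setting $C:=K\cap\tau$, each line $\ell\subset\tau$ spans with $V$ a hyperplane $\langle V,\ell\rangle$ through $V$; if this is an $a$-hyperplane then $\ell\cap C=\emptyset$, and if it is a $c$-hyperplane then the sub-cone count $|\ell\cap C|q^{n-2}+\theta_{n-3}=c$ gives $|\ell\cap C|=d$. Thus every line of $\tau$ meets $C$ in $0$ or $d$ points, and $|C|q^{n-2}+\theta_{n-3}=k^\ast$ forces $|C|=qd+d-q$, so $C$ is a maximal arc of degree $d$ and $K$ is the asserted cone. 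The anticipated difficulty is entirely in Step~4 (making the quadratic estimates tight enough that the window has width below $2q^{n-3}$, which is what pins the dimension to $n\ge5$) and in the clean extraction of $u_b\equiv0\pmod q$ in Step~5 from $\gcd(d-1,q)=1$.
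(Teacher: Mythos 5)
Your proposal follows the paper's own six-step proof essentially verbatim: the same Bose--Burton identification of the $a$-hyperplane sections, the same bound $k\le dq^{n-1}+\theta_{n-3}$, the same congruence via \Cref{equivl}, the same two forbidden intervals for $k$ obtained from the upward-opening quadratics $t_c$ and $t_a$ (with the same evaluation points, and $n\ge5$ indeed entering exactly in the estimate at $k^\ast+q^{n-3}$), the same deduction $u_b\equiv0\pmod q$ from $\gcd(d-1,q)=1$ leading to $u_a=q/d$, and the same application of \Cref{corcon} to a plane disjoint from the vertex. The only part not carried out explicitly is the polynomial algebra in Step~4, but the evaluation points and sign claims you propose are precisely those verified in the paper, so the approach is sound and identical in substance.
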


	The proof of the theorem has been split into the following six steps.

	\textbf{Step 1.} If $\pi$ is an $a$-hyperplane then $K\cap\pi$ is an $(n-3)$-dimensional subspace $V$ of $\Sigma$. 
	
	\begin{proof}\renewcommand{\qedsymbol}{}
		As $K$ is a plane blocking set in $\Sigma,$ so is its intersection with any subspace. Since $|K\cap\pi|=\theta_{n-3}$, by \cite{bose}, $K\cap \pi$ is an $(n-3)$-dimensional subspace of $\Sigma$.
	\end{proof}
	
	\textbf{Step 2.} $k\le dq^{n-1}+\theta_{n-3}$.
	
	\begin{proof}\renewcommand{\qedsymbol}{}
		Let $\pi$ and $V$ be as in Step 1. Let $\eta$ be an $(n-2)$-dimensional subspace of $\Sigma$ contained inside $\pi$ and containing $V$. Since each of the $q$ hyperplanes through $\eta$ different from $\pi$ has at most $c-\theta_{n-3}=dq^{n-2}$ points of $K\setminus V$, we have $k\le q(dq^{n-2})+\theta_{n-3}=dq^{n-1}+\theta_{n-3}$.
%
%
	\end{proof}
	
	\textbf{Step 3.} $k\equiv\theta_{n-4}$(mod $q^{n-3}$).

	\begin{proof}\renewcommand{\qedsymbol}{}
		Note that $a\equiv b\equiv c\equiv \theta_n\equiv\theta_{n-1}\equiv\theta_{n-4}$(mod $q^{n-3}$). As $q$ is a prime power, $\theta_{n-4}$ and $q^{n-3}$ are coprime. Hence by \Cref{equivl} we have $k\equiv\theta_{n-4}$(mod $q^{n-3}$).
	\end{proof}

	\textbf{Step 4.} $k=q^{n-2}(qd+d-q)+\theta_{n-3}, t_a=q(q+1-d)/d, t_b=\theta_n-\theta_2$ and $t_c=(q+1)(qd+d-q)/d$.
	
	\begin{proof}\renewcommand{\qedsymbol}{}
		Recall that $c\le k\le dq^{n-1}+\theta_{n-3}$. Therefore if we show that $k$ cannot assume any value in the intervals $[c,q^{n-2}(qd+d-q)+\theta_{n-3}-q^{n-3}]$ and $[q^{n-2}(qd+d-q)+\theta_{n-3}+q^{n-3},dq^{n-1}+\theta_{n-3}]$ for all $n\ge5,q\ge2$, by Step 3 we can say that $k=q^{n-2}(qd+d-q)+\theta_{n-3}$.
		
		Recall that $a<b<c$, $t_a$ and $t_c$ are quadratic expressions in $k$ with a positive leading coefficient. Therefore if $t_a\le\lambda$ (similarly $t_c\le\lambda$) for two distinct values $x,y$ (with $x<y$) of $k$, then $t_a\le\lambda$ (similarly $t_c\le\lambda$) in the interval $[x,y]$.

	For $k=c,$
	\begin{equation*}
		t_c=\dfrac{(d-1)^2q^{n+1}-q^3+qd-d^2+d}{(q-1)d(d-q-1)}
	\end{equation*}
	which is negative for all $n\ge 2$.

	For $k=q^{n-2}(qd+d-q)+\theta_{n-3}-q^{n-3},$
	\begin{multline*}
		t_c=\dfrac{1}{(q-1)dq(d-q-1)}(q^{n+1}(d-1)+q^n(d-1)-q^{n-1}+d^2(q^4+q^3-q^2-q)\\+d(-q^5-3q^4+q^2+1)+(q^5+q^4+2q^2-q))
	\end{multline*}
	which is negative for all $n\ge 4,q\ge 2$ and $2\le d\le q-1$.
	
	Therefore $t_c$ is negative for $k$ in the interval $[c,q^{n-2}(qd+d-q)+\theta_{n-3}-q^{n-3}]$ for all $n\ge 5,q\ge 2$ and $2\le d\le q-1$. Hence $k\notin[c,q^{n-2}(qd+d-q)+\theta_{n-3}-q^{n-3}]$.

	For $k=q^{n-2}(qd+d-q)+\theta_{n-3}+q^{n-3},$
	\begin{multline*}
		t_a=\dfrac{1}{dq(q+1)(d-1)}(-q^n-q^{n-1}+\theta_{n-1}(d-1)-d^2q^2(q+1)+d(q^4+3q^3+2q^2-2q-2)\\+(-q^4-2q^3+4q+3))
	\end{multline*}
	which is negative for all $n\ge 5,q\ge 2$ and $2\le d\le q-1$.

	For $k=q^{n-1}d+\theta_{n-3},$
	$$t_a=\dfrac{q^2\theta_{n-3}(d-q)-(q^2-q-1)d^2+d(q^3-q-1)}{d(q+1)(d-1)}$$
	which is less than 1 for all $n\ge 4,q\ge 2$ and $2\le d\le q-1$.
	
	Therefore $t_a<1$ for $k$ in the interval $[q^{n-2}(qd+d-q)+\theta_{n-3}+q^{n-3},dq^{n-1}+\theta_{n-3}]$ for all $n\ge 5,q\ge 2$ and $2\le d\le q-1$. Hence $k\notin[q^{n-2}(qd+d-q)+\theta_{n-3}+q^{n-3},dq^{n-1}+\theta_{n-3}]$.
	
	The values of $t_a,t_b$ and $t_c$ follow from the Equations \ref{ta}, \ref{tb} and \ref{tc} respectively.  
	\end{proof}

	\textbf{Step 5.}  Any hyperplane containing $V$ intersects $K$ in either $a$ or $c$ points and any hyperplane not containing $V$ intersects in $b$ points.
	
	\begin{proof}\renewcommand{\qedsymbol}{}
		Let $\eta$ and $\pi$ be as in Step 2 and let $u_i$ denote the number of $i$-hyperplanes through $\eta$. Counting the points of $K$ through the hyperplanes containing $\eta$, we get
		\begin{align}\label{uu3}
			&a+(b-a)u_b+(c-a)u_c=k\nonumber\\
			\Rightarrow&\theta_{n-3}+q^{n-3}(d-1)(q+1)u_b+q^{n-2}du_c=\theta_{n-3}+q^{n-2}(qd+d-q)\nonumber\\
			\Rightarrow&(d-1)(q+1)u_b+qdu_c=q(qd+d-q).
		\end{align}
%
%
%
		
		As 
		$d-1$ and $q+1$ are coprime to $q$, from \Cref{uu3}, we have $q\mid u_b$. Thus $u_b=0$ or $q$.
	
		If $u_b=q,$ $u_c=0$ as $u_a\ge1$. Thus $$q(d-1)(q+1)=q(qd+d-q),$$ a contradiction. 
		
		Therefore $u_b=0$, $u_c=q+1-q/d$ and $u_a=q/d$, i.e. through every $(n-2)$-dimensional subspace $\eta$ in $\pi$ with $\eta\cap K=V$ there are $q/d$ $a$-hyperplanes and $(q+1-q/d)$ $c$-hyperplanes.
		
		Note that there are $q+1$ choices of $\eta$ in $\pi$. Counting all the $a$-hyperplanes through $V$ by changing $\eta$ in $\pi$ we see that there are $(q/d-1)(q+1)+1=t_a$ of them. Similarly there are $(q+1-q/d)(q+1)=t_c$ $c$-hyperplanes through $V$. As $t_a+t_c=\theta_2$ is the total number of hyperplanes through $V$, we conclude that any hyperplane not containing $V$ is a $b$-hyperplane.
	\end{proof}

	\textbf{Step 6.} $K$ is a maximal arc cone.

	\begin{proof}
		From Corollary \ref{corcon} and Step 5, it follows that $K$ is a cone with vertex $V$ and a plane $\tau$ containing $qd+d-q$ points as base. So any $a$-hyperplane through $V$ intersects $\tau$ in a line with $0$ points of $K$ and any $c$-hyperplane through $V$ intersects $\tau$ in a line with $d$ points of $K$ and vice versa. Therefore $K\cap \tau$ is a set of $qd+d-q$ points such that any line of $\tau$ contains $0$ or $d$ points of $K$. Hence $K\cap \tau$ is a maximal arc and $K$ is a cone with an $(n-3)$-dimensional vertex $V$ and a maximal arc of degree $d$ as base.
	\end{proof}

	\begin{remark}
		Note that $u_a=q/d$ in \Cref{maximalA}, proving the simple fact that for maximal arcs with $d\le q$ to exist, it is necessary that $d|q$.
	\end{remark}

\textbf{Acknowledgments:} The author would like to thank his supervisors Dr. Geertrui Van de Voorde and Prof. Bart De Bruyn for the insightful discussions, invaluable remarks and suggestions.

\end{document}